\newtheorem{theorem}{Theorem}
\newtheorem{conjecture}{Conjecture}
\newtheorem{corollary}{Corollary}
\newtheorem{lemma}{Lemma}
\newtheorem{proposition}{Proposition}
\newtheorem{remark}{Remark}
\newcommand\ackname{Acknowledgements}
  \newenvironment{acknowledgements}{%
      \titlepage
      \null\vfil
      \@beginparpenalty\@lowpenalty
      \begin{center}%
        \bfseries \ackname
        \@endparpenalty\@M
      \end{center}}%
     {\par\vfil\null\endtitlepage}
\title{Uniqueness in the local Donaldson-Scaduto conjecture}
\author{Gorapada Bera, Saman Habibi Esfahani, Yang Li}
\date{\today}
\begin{document}
\maketitle

\begin{abstract}
The local Donaldson-Scaduto conjecture predicts the existence and uniqueness of a special Lagrangian pair of pants with three asymptotically cylindrical ends in the Calabi-Yau 3-fold $X \times \mathbb{R}^2$, where $X$ is an ALE hyperk\"ahler 4-manifold of $A_2$ type. The existence of this special Lagrangian has previously been proved \cite{esfahani2024donaldson}. In this paper, we prove a uniqueness theorem, showing that no other special Lagrangian pair of pants satisfies this conjecture.
\end{abstract}

\section{Introduction}

Donaldson and Scaduto conjectured the existence and uniqueness of an associative pair of pants submanifold in the $G_2$-manifold $X^4 \times \mathbb{R}^3$, where $(X^4,\omega_1,\omega_2,\omega_3)$ is a hyperk\"ahler $K3$ surface \cite[Conjecture 1]{donaldson2020associative}. Let $\alpha_1, \alpha_2, \alpha_3$ be $(-2)$-classes in $H^2(X^4;\mathbb{Z})$, that is, $\alpha_i^2 = -2$ with respect to the intersection product, such that $\alpha_1 + \alpha_2 + \alpha_3 = 0$. Each $\alpha_i$ determines a non-zero vector $v_i:=(\omega_1\cdot \alpha_i,\ \omega_2\cdot \alpha_i,\ \omega_3\cdot \alpha_i) \in \mathbb{R}^3$, and hence a complex structure $J_i$ in the $S^2$-family of complex structures on $X^4$. Assume each $\alpha_i$ is \textit{irreducible}, i.e., not decomposable into $(-2)$-classes all of which determine the same complex structure $J_i$.
Then $\alpha_i$ is represented uniquely by a smooth embedded $J_i$-holomorphic sphere $\Sigma_i$ in $X^4$ \cite[Section 4]{donaldson2020associative}. Moreover, the three cylindrical submanifolds $P_i := \Sigma_i \times (\mathbb{R}^+ \cdot v_i) \subset X^4 \times \mathbb{R}^3$ are associative submanifolds.

\begin{conjecture}[Donaldson-Scaduto conjecture]
\label{Global-Donaldson-Scaduto-conjecture}
There is a unique associative submanifold $P$ in $X^4 \times \mathbb{R}^3$ with three ends asymptotic to cylinders $P_1$, $P_2$, and $P_3$.
\end{conjecture}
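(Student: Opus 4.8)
The plan is to reduce Conjecture \ref{Global-Donaldson-Scaduto-conjecture} to its local model and to prove uniqueness there. Following the localization in which a $K3$ surface degenerates to the ALE space $X$ of $A_2$ type near the configuration of the three $(-2)$-spheres, I would replace the associative problem in $X^4\times\mathbb{R}^3$ by the special Lagrangian problem in the Calabi--Yau $3$-fold $X\times\mathbb{R}^2$ addressed in the abstract, in which the three cylinders $P_i=\Sigma_i\times(\mathbb{R}^+\cdot v_i)$ become the three asymptotically cylindrical ends. Existence is already supplied by \cite{esfahani2024donaldson}, so the whole task is uniqueness. Since special Lagrangians are calibrated, any competitor is automatically smooth and volume-minimizing in its relative homology class, and its ends are governed by the convergence of the cross-sections to the rigid holomorphic spheres $\Sigma_i$. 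The first step is therefore to make this asymptotic convergence quantitative through a weighted analysis on the cylindrical ends, so that two solutions can be meaningfully compared both in the interior and at infinity.

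The heart of the argument should exploit the integrable structure of the ALE space of $A_2$ type. Writing $X$ in its Gibbons--Hawking form as a triholomorphic $S^1$-fibration over $\mathbb{R}^3$ with three monopole points---which encode the three $(-2)$-classes---the circle action extends to $X\times\mathbb{R}^2$, and I would show that the special Lagrangian equation descends under the $S^1$-quotient to a two-dimensional calibrated problem: an invariant special Lagrangian corresponds to a holomorphic curve, equivalently a minimal surface for the reduced and possibly singular quotient metric, with its three ends becoming three prescribed asymptotic rays anchored at the monopole points. In this reduced picture uniqueness becomes tractable, since the graphing or spectral function of such a curve obeys a scalar elliptic equation whose three-ended asymptotic data pin it down by the maximum principle, exactly mirroring the uniqueness of each $\Sigma_i$ in its class. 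A McLean-type index computation on the appropriate weighted space should then show that the linearized operator has index zero with trivial cokernel, yielding local rigidity, which the maximum principle promotes to uniqueness among $S^1$-invariant solutions.

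The main obstacle, and where I expect essentially all of the difficulty to concentrate, is to remove the invariance hypothesis, i.e. to prove that every special Lagrangian satisfying the conjecture is automatically $S^1$-invariant. A priori a competitor need not respect the Gibbons--Hawking circle, and ruling out non-invariant solutions is the crux. I would attempt this either through a moment-map convexity argument, forcing the calibrated condition together with the triholomorphic symmetry to make the generating data constant along the orbits, or, more robustly, by building from the known invariant solution a local calibrated foliation of a neighbourhood and invoking the maximum principle: any other calibrated submanifold tangent to a leaf at an interior or asymptotic contact point must coincide with that leaf. Both routes are delicate precisely because of the non-compactness, since the comparison has to be carried out simultaneously in the interior and on the three ends; this requires sharp weighted decay estimates together with a unique-continuation statement at cylindrical infinity, and it is the interplay between the symmetry reduction and this weighted asymptotic analysis that I regard as the genuine heart of the proof.
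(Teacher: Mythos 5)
There is a fundamental problem with your proposal: the statement you are trying to prove is a \emph{conjecture}, and the paper does not prove it. The paper explicitly states that the Donaldson--Scaduto conjecture ``remains unresolved for arbitrary hyperk\"ahler $K3$ surfaces at present''; what the paper actually proves is the \emph{local} version, in which the $K3$ surface is replaced by an ALE hyperk\"ahler $4$-manifold $X$ of $A_2$ type. Your very first step --- ``replace the associative problem in $X^4\times\mathbb{R}^3$ by the special Lagrangian problem in $X\times\mathbb{R}^2$'' with $X$ the ALE space --- is precisely the gap. A generic hyperk\"ahler $K3$ surface admits no triholomorphic circle action and no Gibbons--Hawking presentation, so the entire $S^1$-reduction machinery (moment maps, dimensional reduction to a scalar elliptic problem) is simply unavailable in the global setting. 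The local model describes the geometry of the $K3$ only in an adiabatic limit where the $K3$ degenerates to an orbifold with an $A_2$-singularity; for a fixed, non-degenerate hyperk\"ahler metric there is no region where the ALE model is a faithful description, and no argument is given (nor is one known) that an associative in $X^4\times\mathbb{R}^3$ must localize near such a region or that uniqueness in the limit implies uniqueness before the limit. Making any such reduction rigorous would require a quantitative degeneration/gluing analysis that constitutes an open problem in its own right; the paper itself only claims its results ``may serve as a first step'' toward the conjecture for desingularizations of Kummer-type orbifolds.

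Within the local model, your outline does broadly track the paper's actual argument for its Theorem 2: the $U(1)$-invariance of any competitor is proved exactly by your ``moment-map'' route (the moment map $u_3$ is harmonic on the special Lagrangian, decays along the ends, hence vanishes by the maximum principle, forcing the circle direction to be tangent), and uniqueness of invariant solutions is reduced to a scalar problem --- though in the paper this is not a holomorphic-curve/minimal-surface problem but the Dirichlet problem for the real Monge--Amp\`ere equation $\det D^2\varphi=V$ with affine linear boundary data, reached only after a substantial graphicality argument for $L/U(1)$ over the polygon (degree theory plus maximum principles plus the topology of the zero set of an affine function); no index computation is needed. But note that your assessment of where the difficulty lies is inverted: in the paper the invariance step is a short lemma, while the genuinely hard step is graphicality of the quotient. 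None of this, however, repairs the main gap: what you have sketched is (at best) a proof of the paper's local uniqueness theorem, not of the stated global conjecture, which remains open.
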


Since $\alpha_1 + \alpha_2 + \alpha_3 = 0$, the vectors $v_i$ lie in a plane in $\mathbb R^3$, so, without loss of generality, we can assume that they are contained in $\mathbb{R}^2 \times \{0\}$. Consequently, the cylinders $P_i$ are special Lagrangians in $X^4 \times \mathbb{R}^2  \times \{0\} \subset X^4 \times \mathbb{R}^3$.
Thus, by the maximum principle (Lemma \ref{max-p}), an associative submanifold $P$ asymptotic to $P_i$ is a special Lagrangian submanifold contained in the Calabi-Yau 3-fold $X^4 \times \mathbb{R}^2$. Therefore, both the existence and uniqueness problems reduce to questions about special Lagrangian submanifolds.

The local version of the Donaldson-Scaduto conjecture predicts the existence and uniqueness of a special Lagrangian submanifold $L$ in the Calabi-Yau 3-fold $X \times \mathbb{R}^2$, where $X$ is an $A_2$-type ALE hyperk\"ahler 4-manifold with three holomorphic spheres $\Sigma_1, \Sigma_2, \Sigma_3$. $L$ has three ends, each asymptotic to $L_i := \Sigma_i \times (\mathbb{R}^+ \cdot \tilde{v}_i)$, where $\tilde{v}_i$ is the 90-degree rotation of $v_i$ in $\mathbb{R}^2$, ensuring the phase $\theta = 0$. The connection between the local and the global version is that when the $K3$ surface is the small desingularization of an orbifold with local $A_2$-singularity, the local version captures the metric behavior near the desingularization region.

The existence of a special Lagrangian $L_0 \subset X \times \mathbb{R}^2$ satisfying the local Donaldson-Scaduto conjecture is proved in \cite{esfahani2024donaldson}. The method also generalizes to the case where $X$ is an $A_{n-1}$-type ALE or ALF gravitational instanton with $n \geq 3$, given by the Gibbons-Hawking ansatz, where the monopole points $p_1, \ldots, p_n$ form a convex polygon arranged counterclockwise in a plane in $\mathbb{R}^3$. In this case, there are $n$ holomorphic spheres $\Sigma_1, \ldots, \Sigma_n$ in $X$, projecting to the edges of the convex polygon.

\begin{proposition}[Generalized local Donaldson-Scaduto conjecture: existence \cite{esfahani2024donaldson}]\label{Donaldson-Scaduto-conjecture} There is an $(n-1)$-dimensional family of $U(1)$-invariant special Lagrangian submanifolds in the Calabi-Yau 3-fold $X \times \mathbb{R}^2$, each homeomorphic to an $n$-holed 3-sphere and with $n$ asymptotically cylindrical ends, modeled on the product of $\Sigma_i$ and $\{y \in \mathbb{R}^2 \; | \; (p_{i+1} - p_i) \cdot y = c_i \}$, where the parameters $\{c_i\}_{i=1}^n$ satisfy one constraint $\sum_{i=1}^n c_i = 0$.
\end{proposition}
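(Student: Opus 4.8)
The plan is to construct these submanifolds as $U(1)$-reductions, converting the special Lagrangian system into a single scalar Monge--Amp\`ere equation on a planar domain. Write $X$ via the Gibbons--Hawking ansatz as a circle bundle over an open set of $\mathbb{R}^3_{(x_1,x_2,x_3)}$ with $g=V\,|dx|^2+V^{-1}(d\theta+\alpha)^2$, $d\alpha=*_3 dV$, and hyperk\"ahler forms $\omega_i=(d\theta+\alpha)\wedge dx_i+V\,dx_j\wedge dx_k$ for $(i,j,k)$ cyclic. On $X\times\mathbb{R}^2_y$ take $\omega=\omega_1+dy_1\wedge dy_2$ and $\Omega=(\omega_2+i\omega_3)\wedge(dy_1+idy_2)$. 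First I would note that the fiberwise $U(1)$-action is Hamiltonian for $\omega$ with moment map $x_1$, since $\iota_{\partial_\theta}\omega=dx_1$. Hence any $U(1)$-invariant Lagrangian $L$ has $\partial_\theta$ tangent and $dx_1|_{TL}=0$, forcing $x_1\equiv\mathrm{const}$ on $L$; matching the plane of the polygon fixes this constant to $0$, so $L$ is a circle bundle over a surface $S\subset\mathbb{R}^2_{(x_2,x_3)}\times\mathbb{R}^2_y$.

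Next I would restrict the calibration conditions to the slice $\{x_1=0\}$. There $\omega_1|_{\{x_1=0\}}=V\,dx_2\wedge dx_3$ and $(\omega_2+i\omega_3)|_{\{x_1=0\}}=(d\theta+\alpha)\wedge(dx_2+idx_3)$, so with $w=x_2+ix_3$ and $\zeta=y_1+iy_2$ one gets $\Omega|_{\{x_1=0\}}=(d\theta+\alpha)\wedge dw\wedge d\zeta$. Evaluating on the invariant frame $(\partial_\theta,\tilde u,\tilde v)$ collapses the $\theta$-direction and shows that $\omega|_L=0$ and $\mathrm{Im}\,\Omega|_L=0$ are equivalent to the vanishing on $S$ of the two closed $2$-forms $\eta_1=dx_2\wedge dy_2+dx_3\wedge dy_1$ and $\eta_2=V\,dx_2\wedge dx_3+dy_1\wedge dy_2$. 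Writing $S$ as a graph $y_1=\partial_{x_3}\phi,\ y_2=\partial_{x_2}\phi$ makes $\eta_1|_S=0$ automatic and turns $\eta_2|_S=0$ into the real Monge--Amp\`ere equation $\det D^2\phi=V$ on a domain of the $(x_2,x_3)$-plane, with $\phi$ convex since $V>0$; the calibrated, hence area-minimizing, nature of $L$ shows that $S$ is at the same time a weighted-minimal surface, which I would use for a priori control.

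Then I would solve this equation on the interior $\Omega$ of the convex polygon with vertices $p_i$, imposing along each edge $[p_i,p_{i+1}]$ the asymptotic data of the cylinder $\Sigma_i\times\{(p_{i+1}-p_i)\cdot y=c_i\}$: the combination $(p_{i+1}-p_i)\cdot\nabla\phi$ converges to the constant encoding $c_i$ while the orthogonal combination diverges, so that $\nabla\phi$ sweeps out the full line as $x$ tends to edge $i$. Existence and the deformation theory would follow either from the continuity method with Caffarelli-type interior and boundary estimates for $\det D^2\phi=V$, or equivalently from the second boundary value problem and its optimal-transport formulation with source measure $V\,dx$ and prescribed recession directions. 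The dimension count is then bookkeeping: the end positions $c_1,\dots,c_n$ are the free parameters, and the single homological relation $\sum_{i=1}^{n}[\Sigma_i]=0$ in $H_2(X)$, equivalently the closing-up identity $\sum_i(p_{i+1}-p_i)=0$, pairs with the asymptotic data to give exactly one linear constraint $\sum_i c_i=0$, leaving an $(n-1)$-dimensional family. Finally I would check that each planar solution lifts to a smooth embedded special Lagrangian across the $U(1)$-fixed locus, where the circle collapses onto the spheres $\Sigma_i$, and that the total space has the topology of an $n$-holed $3$-sphere.

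The hard part will be the boundary analysis of the Monge--Amp\`ere problem, which is doubly degenerate: $V\to\infty$ at the vertices $p_i$ and $\nabla\phi\to\infty$ along the edges, so the equation is singular precisely where the geometry, namely the collapsing circle and the cylindrical ends, is most delicate. Establishing the sharp asymptotics, that is, exponential convergence of $L$ to the model cylinders together with the precise blow-up rate of $\nabla\phi$ that produces them, and then upgrading this to a Fredholm or transversality statement that pins the family's dimension at $n-1$ with the constraint $\sum_i c_i=0$, is where essentially all of the work lies; by contrast, the smooth lift across the $U(1)$-fixed points is a routine local model computation.
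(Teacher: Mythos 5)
Your proposal follows essentially the same route as the construction this proposition imports from \cite{esfahani2024donaldson} (which the present paper does not reprove, but whose structure is visible in its uniqueness argument): $U(1)$ reduction in Gibbons--Hawking coordinates, the gradient-graph ansatz converting the reduced special Lagrangian system into the real Monge--Amp\`ere equation $\det D^2\varphi = V$ on the polygon, edge data encoding the $c_i$ whose cyclic consistency forces the single constraint $\sum_{i} c_i = 0$, and a lift across the $U(1)$-fixed locus over the vertices. The only notable difference is cosmetic: you phrase the edge behavior as asymptotic conditions on $\nabla\varphi$ (second-boundary-value / optimal-transport style), whereas the cited construction and the paper's uniqueness proof use the equivalent but cleaner formulation that $\varphi$ extends continuously to affine linear values on each edge, so that existence and uniqueness reduce to a Dirichlet problem for $\det D^2\varphi = V$ with piecewise affine boundary data.
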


\begin{figure}[H]
\includegraphics[width=12cm]{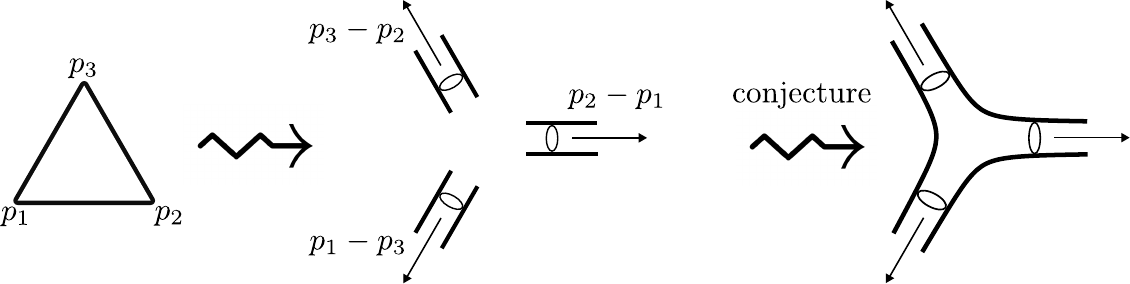}
\centering
  \caption{Local Donaldson-Scaduto conjecture $(n=3)$.}
  \label{Donaldson-Scaduto-figure}
\end{figure}

The $(n-1)$ parameters geometrically correspond to the translation of the $n$ model cylinderical ends, subject to the constraint $\sum_{i=1}^n c_i = 0$, which comes from the vanishing of the integral of $\text{Im}(\Omega)$ over the special Lagrangians. Two of these parameters correspond to global translations of special Lagrangians along the $\mathbb{R}^2$ directions, while the remaining $(n-3)$ parameters result in geometrically distinct special Lagrangian submanifolds. Moreover, these special Lagrangians are rigid after fixing the asymptotics, as shown in \cite{MR4495257}.

Here, we prove a uniqueness counterpart to Proposition \ref{Donaldson-Scaduto-conjecture}:

\begin{theorem}\label{main}
Let $L \subset X \times \mathbb{R}^2$ be a special Lagrangian submanifold homeomorphic to an $n$-holed 3-sphere with $n$ asymptotically cylindrical ends modeled on the product of $\Sigma_i$ and $\{y \in \mathbb{R}^2 \; | \; (p_{i+1} - p_i) \cdot y = c_i \}$. Then, $L$ is a member of the $(n-1)$-dimensional family of $U(1)$-invariant special Lagrangians constructed in \cite{esfahani2024donaldson}. 
\end{theorem}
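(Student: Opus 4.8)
The plan is to prove the theorem in two stages: first show that any such $L$ must be invariant under the Gibbons--Hawking circle action, and then reduce the invariant problem to a two-dimensional one whose solutions are exactly the known family. Write $\partial_\theta$ for the generator of the tri-Hamiltonian $U(1)$-action on $X$, extended trivially to $X\times\mathbb{R}^2$, and let $(x_1,x_2,x_3):X\to\mathbb{R}^3$ be the hyperk\"ahler moment map, so that $\iota_{\partial_\theta}\omega=-dx_1$ for the K\"ahler form $\omega=\omega_1+dy_1\wedge dy_2$. Since $L$ is Lagrangian, for any $v\in TL$ one has $dx_1(v)=-\omega(\partial_\theta,v)=-\omega(\partial_\theta^{\perp},v)$, where $\partial_\theta^{\perp}$ is the normal component; because $TL$ is Lagrangian this vanishes for all $v$ precisely when $\partial_\theta^{\perp}=0$. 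Thus $x_1$ is constant on $L$ if and only if $\partial_\theta$ is everywhere tangent to $L$, i.e.\ if and only if $L$ is $U(1)$-invariant. The first stage therefore reduces to showing that $x_1$ is constant on $L$.

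The key lemma is that $x_1|_L$ is harmonic. I would establish this from two facts. First, a direct computation with the Gibbons--Hawking metric $g=V(dx_1^2+dx_2^2+dx_3^2)+V^{-1}(d\theta+\alpha)^2$ gives $\Delta_{X\times\mathbb{R}^2}x_1=0$. Second, since $\nabla x_1=-J\partial_\theta$ with $\partial_\theta$ Killing and the ambient metric K\"ahler, the Hessian of $x_1$ is $J$-invariant on the diagonal, $\operatorname{Hess}x_1(JW,JW)=\operatorname{Hess}x_1(W,W)$, as follows from $\operatorname{Hess}x_1(W,W)=g(\nabla_W\partial_\theta,JW)$ together with the antisymmetry of $\nabla\partial_\theta$. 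Because $L$ is special Lagrangian it is minimal and satisfies $NL=J\,TL$; choosing an orthonormal frame $e_a$ of $TL$, minimality gives $\Delta_L x_1=\sum_a\operatorname{Hess}x_1(e_a,e_a)$, while the $J$-invariance and $\Delta_{X\times\mathbb{R}^2}x_1=0$ give $\sum_a\operatorname{Hess}x_1(e_a,e_a)=\tfrac12\Delta_{X\times\mathbb{R}^2}x_1=0$. Hence $\Delta_L x_1=0$. Each asymptotic model $\Sigma_i\times\{(p_{i+1}-p_i)\cdot y=c_i\}$ lies in the plane $\{x_1=0\}$ containing the polygon, so $x_1\to 0$ along every end; a harmonic function on the complete manifold $L$ tending to $0$ at all ends can have no interior extremum, so $x_1\equiv 0$ and $L$ is $U(1)$-invariant.

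It remains to identify the invariant solutions. Performing the K\"ahler reduction of $X\times\mathbb{R}^2$ at the level $\{x_1=0\}$, the quotient is the Calabi--Yau surface $(\mathbb{C}^2_{w,z},\ \omega_{\mathrm{red}}=V\,dx_2\wedge dx_3+dy_1\wedge dy_2,\ dw\wedge dz)$ with $w=x_2+ix_3$, $z=y_1+iy_2$, and $V$ the restriction of the potential to the polygon plane (singular at the monopole points $p_i$). The invariant $L$ descends to a surface $\bar L$ on which $\omega_{\mathrm{red}}$ and $\operatorname{Im}(dw\wedge dz)$ vanish, i.e.\ a special Lagrangian in the reduced surface, which after a hyperk\"ahler rotation becomes a holomorphic curve; its topology is that of an $n$-holed sphere and its ends are the rays $\mathbb{R}^{+}\tilde v_i$. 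I would then prove uniqueness of $\bar L$ with these prescribed ends---either by positivity of intersections, so that two holomorphic curves sharing all ends must coincide, or by a comparison/maximum principle for the reduced special Lagrangian equation---and match $\bar L$ with the reduction of the family of Proposition \ref{Donaldson-Scaduto-conjecture}, concluding that $L$ is the corresponding member.

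The invariance stage above is robust, so I expect the main obstacle to be this last reduced uniqueness. The difficulties are that the pair of pants is not a global graph over the polygon, so a naive scalar comparison is unavailable and one must use the complex-analytic rigidity of the curve, and that the circle action degenerates over the monopole points $p_i$; handling these fixed loci in the reduction, and carrying out the asymptotic and regularity analysis on the cylindrical ends that justifies both the maximum principle for $x_1|_L$ and the intersection-theoretic argument downstairs, is where the real work lies.
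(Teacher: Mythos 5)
Your first stage is correct, and it is essentially the paper's own Lemma \ref{U(1)-invariant}: the moment-map coordinate restricts to a harmonic function on $L$ (your direct Hessian computation replaces the paper's citation of Marshall), it decays along the ends, the maximum principle forces it to vanish identically, and the Lagrangian condition converts constancy of the moment map into tangency of the circle generator. The fatal problem is in your second stage. The reduction of $X\times\mathbb{R}^2$ at the single level $\{u_3=0\}$ is \emph{not} a Calabi--Yau surface, and no hyperk\"ahler rotation is available: in the paper's notation the reduced forms are $\omega_{\mathrm{red}}=V\,du_1\wedge du_2+dy_2\wedge dy_1$ and $\Omega_{\mathrm{red}}=(du_1+i\,du_2)\wedge(dy_2+i\,dy_1)$, and a one-line computation gives $\omega_{\mathrm{red}}^2=V\,\big(\operatorname{Re}\Omega_{\mathrm{red}}\big)^2$, so the triple $\big(\omega_{\mathrm{red}},\operatorname{Re}\Omega_{\mathrm{red}},\operatorname{Im}\Omega_{\mathrm{red}}\big)$ violates the hyperk\"ahler normalization wherever $V\neq 1$. (Hyperk\"ahler reduction would require quotienting at all three moment maps; a single K\"ahler reduction of a hyperk\"ahler 4-manifold does not stay hyperk\"ahler.) Consequently the reduced surface $\bar L$ is not a holomorphic curve in any complex structure obtained by rotation: it satisfies the quasi-linear elliptic system (\ref{SLeq}) --- equivalently, where it is a graph, the real Monge--Amp\`ere equation $\det D^2\varphi=V$ --- and positivity of intersections for holomorphic curves is simply not available. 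One could try to interpret solutions of (\ref{SLeq}) as pseudo-holomorphic curves for the closed definite triple above, but that structure degenerates at the monopole points (where $V\to\infty$), which is exactly where $\bar L$ has its boundary $L_{\text{fix}}$; and even then, an intersection-theoretic uniqueness argument would require showing that the relative intersection number of two solutions with identical ends vanishes, which you nowhere supply.

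Your fallback, ``a comparison/maximum principle for the reduced equation,'' is precisely the hard part of the theorem, and you explicitly defer it. The paper's actual work is to prove that $L_{\mathrm{red}}$ \emph{is} a global graph over the interior of the polygon: this occupies Lemmas \ref{lem-proj-to-polygon} through \ref{Lemma_submersion} and uses the $n$-holed-3-sphere hypothesis (which makes $L_{\mathrm{red}}$ simply connected, Lemma \ref{Lemma_manifold with boundary}), refined exponential asymptotics with a leading eigenfunction term (Lemma \ref{thm_Asymp formula of SL}, via Harnack and spectral decomposition), properness and degree theory for the projection $\pi_u$, and a level-set/branch-point analysis built on a local holomorphic normal form for solutions of (\ref{eqn:ellipticsystem}). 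Note that your proposal never invokes the topological hypothesis at all, yet it is essential: the theorem is stated only for $n$-holed 3-spheres, and Appendix \ref{topology} shows that other topologies (connected sums of $S^2\times S^1$ with $n$ punctures) are a priori conceivable, so an argument blind to the topology of $L$ cannot be complete. Finally, even granting graphicality, matching $L$ with the family requires the boundary analysis the paper carries out --- affine-linear boundary values of the potential on each edge, well-definedness at the vertices via constancy of the potential on $L_{\text{fix}}$ --- before one can quote uniqueness for the Dirichlet problem of $\det D^2\varphi=V$; none of this appears in your proposal.
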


Letting $n=3$, and by the maximum principle (see Lemma \ref{max-p}), we get the uniqueness:

\begin{corollary} [Local Donaldson-Scaduto conjecture: uniqueness] \label{uniqueness}
    Let $P$ be an associative pair of pants submanifold in the $G_2$-manifold $X \times \mathbb{R}^3$, with three ends asymptotic to the half-cylinders $\Sigma_i \times (\mathbb{R}^+ \cdot v_i)$, where $i \in \{1,2,3\}$. Then $P$ coincides with the associative submanifold constructed in \cite{esfahani2024donaldson}.
\end{corollary}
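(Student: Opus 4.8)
The plan is to obtain the corollary from Theorem \ref{main} by the dimensional reduction sketched in the introduction: first show that any associative $P$ with the prescribed ends must lie in the slice $X \times \mathbb{R}^2 \times \{0\}$, where it is forced to be special Lagrangian, and then invoke the $n = 3$ case of Theorem \ref{main}.

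For the reduction I would split $X \times \mathbb{R}^3 = (X \times \mathbb{R}^2) \times \mathbb{R}_t$ and study the transverse coordinate $t$ restricted to $P$. Being calibrated, $P$ is minimal, so its mean curvature vector $H$ vanishes; and since $t$ is affine along a flat, totally geodesic Euclidean factor, its ambient Hessian vanishes identically. The submanifold formula $\Delta_P(t|_P) = \operatorname{tr}_{TP}\!\bigl(\operatorname{Hess}_N t\bigr) + \langle \nabla t, H\rangle$ then shows that $t|_P$ is harmonic for the induced metric. Because each $v_i$ lies in $\mathbb{R}^2 \times \{0\}$, every asymptotic model $\Sigma_i \times (\mathbb{R}^+\cdot v_i)$ sits in the slice, so $t \to 0$ along each cylindrical end; as $P$ consists of a compact core with finitely many asymptotically cylindrical ends, $t$ is bounded and tends to zero at infinity. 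The strong maximum principle (Lemma \ref{max-p}) then forces $t \equiv 0$, i.e. $P \subset X \times \mathbb{R}^2 \times \{0\}$. Under the reduction of the $G_2$ structure to the induced Calabi--Yau structure on $X \times \mathbb{R}^2$, an associative submanifold contained in this slice is precisely a phase-zero special Lagrangian, so $P$ becomes a special Lagrangian pair of pants with the same asymptotically cylindrical ends.

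Theorem \ref{main} with $n = 3$ now places $P$ inside the two-dimensional family of Proposition \ref{Donaldson-Scaduto-conjecture}. To identify the member, I would use the parameter count recorded after that proposition: for $n = 3$ there are $n - 3 = 0$ geometrically distinct moduli, and the two remaining parameters are exactly the positions $c_i$ of the asymptotic cylinders subject to $\sum_i c_i = 0$ (equivalently, the two global $\mathbb{R}^2$-translations). Since the ends of $P$ are prescribed to be $\Sigma_i \times (\mathbb{R}^+\cdot v_i)$, these constants are determined, so $P$ is the unique family member carrying those asymptotics, namely the associative submanifold built in \cite{esfahani2024donaldson}.

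I expect the one genuinely technical step to be the maximum-principle argument on the noncompact $P$: one must confirm that $t|_P$ is bounded and actually decays through the asymptotically cylindrical (rather than merely conical) ends, so that a putative interior extremum cannot be pushed off to infinity, before the strong maximum principle applies. Granting Theorem \ref{main} together with the clean statement of Lemma \ref{max-p}, the remainder is bookkeeping about the moduli.
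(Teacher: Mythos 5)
Your proposal is correct and follows essentially the same route as the paper: the paper's Lemma \ref{max-p} is proved exactly by your argument (minimality of $P$ makes $y_3|_P$ harmonic, decay along the cylindrical ends plus the maximum principle forces $y_3 \equiv 0$, so $P = L \times \{0\}$ with $L$ special Lagrangian of the appropriate phase), after which Theorem \ref{main} with $n=3$ and the prescribed asymptotics pins down the family member. The only cosmetic discrepancies are that Lemma \ref{max-p} \emph{is} this reduction statement rather than the maximum principle itself, and that matching phase $0$ requires the $90$-degree rotation $R$ identifying $v_i$ with $\tilde v_i$, which is the bookkeeping the paper records via $\tilde L$.
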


In this writing, we mainly focus on the case where $L$ is homeomorphic to an $n$-holed 3-sphere. In Appendix \ref{topology}, we show that if $L$ is any 3-manifold special Lagrangian satisfying the generalized local Donaldson-Scaduto conjecture, it is either an $n$-holed 3-sphere or an $n$-holed connected sum of finitely many $S^2 \times S^1$. From the Donaldson-Scaduto conjecture \ref{Global-Donaldson-Scaduto-conjecture}, it is expected that the latter possibility is not realized.

\begin{remark}
\emph{The special Lagrangians (resp. associatives) in the local Donaldson-Scaduto conjecture of Proposition \ref{Donaldson-Scaduto-conjecture} are expected to serve as the building blocks of the gluing constructions of special Lagrangians (resp. associatives) in the Calabi-Yau 3-folds (resp. $G_2$-manifolds) with $A_{n-1}$-type ALE or ALF Lefschetz fibration.} 
\end{remark}

\begin{remark}\emph{
     The Donaldson-Scaduto conjecture \ref{Donaldson-Scaduto-conjecture} remains unresolved for arbitrary hyperk\"ahler $K3$ surfaces at present.  However, the results on the local version of existence in Proposition \ref{Donaldson-Scaduto-conjecture} and of uniqueness in Corollary \ref{uniqueness} may serve as a first step toward proving the conjecture for a neighborhood of desingularizations or resolutions of Kummer surfaces with an $A_2$-singularity.}
\end{remark}

\begin{remark}\emph{
   Donaldson-Scaduto suggests that special Lagrangians (resp. associatives) in Calabi-Yau 3-folds (resp. $G_2$-manifolds) with Lefschetz fibration (resp. Kovalev--Lefschetz fibration) in the adiabatic limit may correspond to certain gradient graphs in the base manifold \cite[Section 4-5]{donaldson2020associative}. The existence and uniqueness in the Donaldson-Scaduto conjecture is the first step towards understanding such special Lagrangians in the vertex region of the gradient graphs.
   }
\end{remark}

\

\noindent
\textbf{Organization.} We focus on proving Theorem \ref{uniqueness}. We start by observing that $L$ must be $U(1)$-invariant, and its projection to the Gibbons-Hawking coordinates $(u_1,u_2)$ lands inside the interior of the convex polygon union with the vertices. The key step is to show that over the interior of the polygon, $L/U(1)$ can be expressed as a smooth graph; this is achieved by a combination of PDE and topological considerations. The uniqueness then boils down to the uniqueness of the solution for the Dirichlet problem of a real Monge-Amp\`ere equation. The Appendix \ref{topology} examines the possibility of $L$ having a different topology.

\

\noindent
\textbf{Acknowledgement.} We thank Professor Simon Donaldson for helpful discussions and the referees for their useful comments on the previous draft.

\section{Preliminaries}\label{Preliminaries}

We review the hyperk\"ahler structure on the $U(1)$-invariant gravitational instanton $X$, the Calabi-Yau structure on $Z = X \times \mathbb{R}^2$, and the $G_2$-structure on $M = X \times \mathbb{R}^3$, as well as asymptotically cylindrical special Lagrangians and associative submanifolds.

\

\noindent
\textbf{Hyperk\"ahler structure.} 
Let $X$ be a complete non-compact $U(1)$-invariant hyperk\"ahler 4-manifold constructed using the Gibbons-Hawking ansatz.

Let $u_1, u_2, u_3$ be coordinates on $\mathbb{R}^3$. For $n \geq 3$, let $p_1, p_2, \ldots, p_n$ denote $n$ distinct points in $\mathbb{R}^3$ that lie on the plane $\mathbb{R}^2 \times \{0\}$ and are arranged in counterclockwise order to form a convex polygon. Consider a principal $U(1)$-bundle $\pi: X^{\circ} \to \mathbb{R}^3 \setminus \{p_1, p_2, \ldots, p_n\}$ with Chern class equal to 1 around each point $p_i$. Let $V: \mathbb{R}^3 \setminus \{p_1, p_2, \ldots, p_n\} \to \mathbb{R}$ be the positive harmonic function
\[
V(u) = A + \sum_{i=1}^n \frac{1}{2 |u - p_i|},
\]
where $A \geq 0$ is a constant. 

Let $\theta$ be a $U(1)$-connection on $X^{\circ}$ with the curvature 2-form $d\theta=-*dV$. The Gibbons-Hawking ansatz provides a hyperk\"ahler structure on $X^{\circ}$ with symplectic forms
\[
\omega_i = \theta \wedge du_i + V \, du_{i+1} \wedge du_{i+2},
\]
with indices understood in the cyclic sense, and the metric $g = V^{-1} \theta^2 + V \sum_{i=1}^3 du_i^2$.

The coordinates $u_1, u_2, u_3$ are the moment maps for the $U(1)$-action for the symplectic forms $\omega_1, \omega_2, \omega_3$, respectively. The manifold $X$ is obtained by adding a point $q_i$ above each point $p_i$, and the hyperk\"ahler structure extends smoothly to $X$ with complex structures $I_1, I_2, I_3$. For each $(a_1, a_2, a_3) \in S^2 \subset \mathbb{R}^3$, we get a complex structure $\sum_{i=1}^3 a_i I_i$ on $X$. When $A=0$, $X$ is an $A_{n-1}$-type ALE space, and for $A>0$, it is an $A_{n-1}$-type ALF space.

Let $\Sigma_i = \pi^{-1}[p_i, p_{i+1}]$ be the preimage of the line segment $[p_i, p_{i+1}]$, where $p_{n+1} = p_1$. Each $\Sigma_i$ is a 2-sphere, which is holomorphic with respect to the complex structure associated with the vector $v_i = (p_{i+1} - p_{i})/|p_{i+1} - p_{i}| \in S^2 \cap (\mathbb{R}^2_{(u_1, u_2)} \times \{0\}) \subset \mathbb{R}^3_{(u_1, u_2, u_3)}$.

\

\noindent 
\textbf{Calabi-Yau structure.} Let $Z = X \times \mathbb{R}^2_{(y_1,y_2)}$. The $6$-dimensional manifold $Z$ can be equipped with the Calabi-Yau structure
\[
g_Z = g_X + g_{\mathbb{R}^2}, \quad \omega = \omega_3 + dy_2 \wedge dy_1,\quad \Omega = (\omega_1 + i \omega_2) \wedge (dy_2 + i dy_1),
\]
where $y_1, y_2$ denote the coordinates on $\mathbb{R}^2$. With our convention $\omega^3 = \frac{3\sqrt{-1}}{4} \Omega \wedge \overline{\Omega}$.

The $U(1)$-action on the Gibbons-Hawking space $X$ extends to a $U(1)$-action on $Z$ by \[e^{i t} \cdot (q,y) \to (e^{it} \cdot q, y), \; \text{ for all }  \; q \in X \; \text{ and } \; y \in \mathbb{R}^2.\] This $U(1)$-action is Hamiltonian with the moment map $u_3: Z \to \mathbb{R}$.

Let $\tilde{v}_i= Rv_i$, where $R: \mathbb{R}^2_{(u_1,u_2)}\to \mathbb{R}^2_{(y_1,y_2)}$ is the linear transformation given by the clockwise 90-degree rotation, 
\[R( \partial_{u_1})= -\partial_{y_2},\quad R( \partial_{u_2})= \partial_{y_1}.\]
Let $L_{i} $ be $ \Sigma_{i} \times (\mathbb{R}^+ \cdot \tilde{v}_{i}) \subset X \times \mathbb{R}^2_{(y_1,y_2)}$ translated along some vector $\tau_i$ in  $\mathbb{R}^2_{(y_1,y_2)}$, so that $L_i$ is contained inside
\[
\Sigma_i \times \{ y \in \mathbb{R}^2 \; | \; (p_{i+1}-p_i)\cdot y=c_i \}\subset X\times \mathbb{R}^2,\quad i \in \{1,2,\ldots, n\},
\]
where $c_i:=(p_{i+1}-p_i)\cdot \tau_i$. A direct computation shows that $L_i$ are $U(1)$-invariant special Lagrangians with phase $\theta = 0$ in $Z$.

\

\noindent
\textbf{$G_2$-structure.} Let $M = X \times \mathbb{R}^3_{(y_1, y_2, y_3)}$. The 7-dimensional manifold $M$ can be equipped with a torsion-free $G_2$-structure 
\begin{align*}
    \phi = dy_1 \wedge dy_2 \wedge dy_3 
      + \sum_{i=1}^3 dy_i \wedge du_i \wedge \theta - \sum_{i=1}^3 V dy_i \wedge du_{i+1} \wedge du_{i+2},
\end{align*}
with cyclic indices. The associated $G_2$-metric is $g_M = g_X + g_{\mathbb{R}^3}$.

Let $P_i = \Sigma_i \times (\mathbb{R}^+ \cdot v_i) \subset X \times \mathbb{R}^3$ translated along some vector in $\mathbb{R}^2_{(y_1,y_2)} \times \{0\}$, where $v_i$ is a vector in $\mathbb{R}^2 \times \{0\} \subset \mathbb{R}^3$. The cylinders $P_i$ are $U(1)$-invariant associative submanifolds in $M$. With the identification $M \cong Z \times \mathbb{R}_{y_3}$, we have $\phi = -dy_3 \wedge \omega - \text{Im}(\Omega)$. For a submanifold $L$ in $X \times \mathbb{R}^2$, we define $\tilde{L} = \{(q, y) \mid (q, R^{-1} y) \in L\}$. The submanifold $P = L \times \{ 0\}$ in $M$ is an associative if and only if $L$ is a special Lagrangian in $X \times \mathbb{R}^2$ with phase $\pi/2$, or equivalently, $\tilde L$ is a special Lagrangian in $X \times \mathbb{R}^2$ with phase $0$.  

\

\noindent
\textbf{Asymptotically cylindrical associatives and special Lagrangian submanifolds.} Let $j_{i}: U_{i}\subset NP_i\to M $ be a tubular neighborhood map for the associative cylinder $P_i$, where $NP_i$ denotes the normal bundle of $P_i$ in $M$. A non-compact associative submanifold $P$ in $M=X\times \mathbb R^3$ is said to be asymptotically cylindrical with asymptotic ends $P_i$, for $i = 1, \ldots, n$, if there exist $R>0$, a compact subset $K_P\subset P$, and normal vector fields $\nu_i$ on each $\Sigma_{i} \times \left ((R,\infty)\cdot v_i \right)$ with $\bigcup_ij_{i}(\operatorname{graph} \nu_i)=P\setminus K_P$ such that 
\begin{equation}\label{eq-def-acyl-asso}
  |{\nu_i}|=o(1)\quad \text{and} \quad  |{\nabla\nu_i}|=o(1) \quad \text{as} \quad t\to +\infty.
\end{equation}

The Corollary \ref{uniqueness} follows from the Theorem \ref{main} and the following lemma.

\begin{lemma}\label{max-p} Let $P$ be an asymptotically cylindrical associative submanifold in $M$ with asymptotic ends $P_i$ $\subset Z \times \{0\}$, where $i=1,\dots,n$. Then $P$ is of the form $L \times \{0\}$, where $L$ is an asymptotically cylindrical special Lagrangian submanifold in $Z$ with phase $\pi/2$. 
\end{lemma}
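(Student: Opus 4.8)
The plan is to show that the last coordinate $y_3$ on $M \cong Z \times \mathbb{R}_{y_3}$ vanishes identically on $P$ by a maximum principle, which forces $P$ into the totally geodesic slice $Z \times \{0\}$; once that is established, the lemma reduces to the pointwise equivalence between associatives of the form $L \times \{0\}$ and phase-$\pi/2$ special Lagrangians already recorded in the preliminaries.

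First I would use that an associative submanifold is calibrated by the closed (torsion-free) associative form $\phi$, hence homologically volume-minimizing, hence minimal, so its mean curvature vector $\vec H$ vanishes. Writing $g_M = g_Z + dy_3^2$ as a Riemannian product with a flat one-dimensional factor, the coordinate function $y_3$ is affine, so $\operatorname{Hess}_M y_3 = 0$. For any ambient function $f$ and any submanifold one has the standard identity $\Delta_P(f|_P) = \operatorname{tr}_{TP}(\operatorname{Hess}_M f) + df(\vec H)$; applying it to $f = y_3$ with $\vec H = 0$ gives $\Delta_{g_P}(y_3|_P) = 0$, i.e. $y_3|_P$ is harmonic for the induced metric.

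Next I would extract decay of $y_3|_P$ at infinity from the asymptotically cylindrical hypothesis: by (\ref{eq-def-acyl-asso}), outside a compact set $K_P$ the manifold $P$ is a union of graphs of normal fields $\nu_i$ over the model cylinders $P_i \subset Z \times \{0\}$ with $|\nu_i| = o(1)$, and since $y_3 \equiv 0$ on each $P_i$ this yields $y_3|_P \to 0$ along every end. Then I would run the maximum principle: if $\sup_P y_3 > 0$, decay at infinity forces this supremum to be attained at an interior point, whence the strong maximum principle for the elliptic operator $\Delta_{g_P}$ makes $y_3|_P$ locally constant and, by connectedness of $P$, globally equal to that positive value, contradicting the decay; the symmetric argument controls the infimum, so $y_3|_P \equiv 0$ and $P \subset Z \times \{0\}$.

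Finally, $P \subset Z \times \{0\}$ gives $P = L \times \{0\}$ for a three-manifold $L \subset Z$ with $TL = TP$, and the identity $\phi = -dy_3 \wedge \omega - \operatorname{Im}(\Omega)$ restricted to $TP$ (on which $dy_3$ vanishes) shows that $P$ is associative precisely when $L$ is special Lagrangian of phase $\pi/2$, as recorded above; the asymptotically cylindrical structure transfers verbatim, since the decaying normal graphs of $P$ over $P_i$ descend to decaying normal graphs of $L$ over its model ends. I expect the only genuinely delicate point to be the non-compact maximum principle in the third step: one must use the end-decay to localize the extremum into a compact region before invoking the strong maximum principle, rather than appealing to a compact maximum principle directly.
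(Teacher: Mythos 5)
Your proposal is correct and takes essentially the same route as the paper: minimality of the calibrated submanifold $P$ makes $y_3|_P$ harmonic, the asymptotically cylindrical condition gives decay of $y_3$ along the ends, the maximum principle then forces $y_3 \equiv 0$, and the identification of $P = L \times \{0\}$ with a phase-$\pi/2$ special Lagrangian $L$ follows from the identity $\phi = -dy_3 \wedge \omega - \operatorname{Im}(\Omega)$ recorded in the preliminaries. You merely make explicit the details the paper leaves implicit, namely the Hessian/mean-curvature identity behind harmonicity and the localization of the extremum needed for the non-compact maximum principle.
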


\begin{proof} 
Let $y_3: P \to \mathbb{R}$ be the restriction of the $y_3$-coordinate function to $P$. Since $P$ is a minimal submanifold in $X \times \mathbb{R}^3$, it follows that the restriction of $y_3$ to $P$ is a harmonic function. We see that $y_3$ agrees with $\langle \nu_i, \partial_{y_3} \rangle$ on the asymptotic ends $P_i$, which vanishes at infinity. Therefore, by the maximum principle, we have $y_3 = 0$ on $P$, and thus $P$ is of the form $L \times \{0\} \subset Z \times \{0\}$.
\end{proof}

Let $L$ be an asymptotically cylindrical special Lagrangian in $Z$ with asymptotic ends $L_i$.
The following lemma upgrades the decay in Equation (\ref{eq-def-acyl-asso}) to exponential decay in all derivatives. Since the proof is standard (see \cite[Thm 6.8]{JoyceSLregularity2004} for a similar argument), we only provide a sketch.
\begin{lemma}The normal vector fields $\nu_i$ over the end of $L_i$ from Equation (\ref{eq-def-acyl-asso}) satisfy the following exponential decay estimates: there are constants $\mu_i>0$ such that for each $k\geq 0$,
\begin{equation}\label{eq exponential decay} |{\nabla^k\nu_i}|=O(e^{-\mu_i t}) \quad \text{as} \quad t\to +\infty.
\end{equation}
\end{lemma}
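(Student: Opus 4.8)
The plan is to treat the end as a normal graph over an \emph{exact} Riemannian cylinder and to run the standard asymptotic analysis for minimal submanifolds. By Lemma \ref{max-p} the associative $P = L \times \{0\}$ is calibrated by $\phi$, hence minimal, and its ends are $P_i = \Sigma_i \times (\mathbb{R}^+ \cdot v_i) = L_i \times \{0\}$; the normal fields of Equation \eqref{eq-def-acyl-asso} are therefore normal graphs of a minimal submanifold, and the decay statement for $\nu_i$ is unaffected by this identification. First I would record that $P_i$ is a genuine product cylinder: $\Sigma_i \subset X$ is a \emph{fixed} holomorphic $2$-sphere and the Euclidean factor is flat, so the induced metric is $g_{\Sigma_i} + dt^2$ with \emph{compact} cross-section $\Sigma_i$ and all geometric data translation-invariant in $t$. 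Over $\{t > R\}$ the minimal-surface equation for the graph of $\nu_i$ takes the form
\begin{equation}\label{plan-pde}
\mathcal{L}\,\nu_i = Q(\nu_i, \nabla \nu_i, \nabla^2 \nu_i),
\end{equation}
where $\mathcal{L} = -\partial_t^2 + B$ is the translation-invariant Jacobi operator of the cylinder, $B \geq 0$ is the self-adjoint cross-sectional (stability) operator on $\Sigma_i$, nonnegative because the calibrated cylinder is stable, and the remainder is quadratically small, $|Q| \lesssim (|\nu_i| + |\nabla\nu_i|)(|\nabla\nu_i| + |\nabla^2\nu_i|)$, wherever $\nu_i$ is small.

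Since $\Sigma_i$ is compact, $B$ has discrete spectrum in $[0,\infty)$ accumulating only at $+\infty$, so the indicial roots of $\mathcal{L}$ — the rates $\gamma$ for which $e^{-\gamma t}\phi$ solves $\mathcal{L}u = 0$ — form the discrete set $\{\pm\sqrt{\mu} : \mu \in \mathrm{spec}(B)\}$. The only zero-rate solutions are the constant Jacobi fields generated by translating $P_i$ in the Euclidean directions orthogonal to $v_i$; on the $N_{\Sigma_i/X}$ summand there is no kernel, because $\Sigma_i$ is a rigid $(-2)$-curve with $H^0(N_{\Sigma_i/X}) = 0$. Hence $\mathcal{L}$ has no indicial root in the open interval $(0, \mu_i)$, where $\mu_i := \sqrt{\mu_1^{+}}$ and $\mu_1^{+} > 0$ is the smallest strictly positive eigenvalue of $B$.

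With this spectral picture in place I would improve $|\nu_i| + |\nabla\nu_i| = o(1)$ to $|\nu_i| = O(e^{-\mu_i t})$ as in \cite[Thm 6.8]{JoyceSLregularity2004}. Decomposing $\nu_i$ into cross-sectional modes, the part orthogonal to $\ker B$ obeys an equation whose linear part has a spectral gap and whose forcing is quadratically small, so a weighted energy (Agmon-type) estimate forces it to decay at rate $\mu_i$. The translational component $\bar\nu_i(t)$, the $\ker B$-projection, satisfies an ODE $\bar\nu_i'' = \overline{Q}$ with quadratically small right-hand side and $\bar\nu_i \to 0$; here the hypothesis $\nu_i \to 0$ is essential, since it encodes that $P$ is asymptotic to the \emph{fixed} model $P_i$ rather than to a nonzero translate, and it promotes $\bar\nu_i$ to exponential decay as well. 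Combining the two contributions gives the $C^0$ estimate.

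Finally I would bootstrap to all derivatives by elliptic regularity. On each unit segment $\Sigma_i \times [t-1, t+1]$ the equation \eqref{plan-pde} is uniformly elliptic with translation-invariant principal part, so interior Schauder estimates bound the higher $C^k$-norms of $\nu_i$ on $\Sigma_i \times [t-\tfrac12, t+\tfrac12]$ in terms of its $C^0$-norm on the slab; inserting $|\nu_i| = O(e^{-\mu_i t})$ and iterating yields $|\nabla^k \nu_i| = O(e^{-\mu_i t})$ for every $k \geq 0$. I expect the delicate step to be the treatment of the translational zero mode $\bar\nu_i$: because $0$ is itself an indicial root, its exponential decay is not supplied by the linear theory and must instead be extracted from the $o(1)$ hypothesis together with the quadratic smallness of the nonlinearity.
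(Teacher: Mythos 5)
Your plan is the correct standard argument and follows the same overall blueprint as the paper's (avowedly sketched) proof: exploit that the model end is an exact product cylinder with compact cross-section, use discreteness of the cross-sectional spectrum plus quadratic smallness of the nonlinearity to upgrade the $o(1)$ decay of Equation (\ref{eq-def-acyl-asso}) to a fixed exponential rate, then bootstrap all derivatives by elliptic estimates. The implementations differ, though. You linearize the minimal-submanifold system, obtaining the second-order operator $\mathcal{L}=-\partial_t^2+B$, and propose a mode decomposition with Agmon-type weighted energy estimates; the paper instead stays inside the special Lagrangian deformation complex, where the linearization is McLean's first-order operator $*d\oplus d^*$, written as $D_{L_i}=\frac{d}{dt}+D_{\Sigma_i}$, and it gets the initial decay by an Adams-style blow-up: rescaling $\nu_i$ on slabs of length $T$, extracting a solution of the linearized equation on the whole cylinder, and deriving a contradiction from the spectral decomposition. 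The paper then needs a separate Simon-type blow-up to get $|\nabla^2\nu_i|=O(e^{-\mu_i t})$ before applying weighted Schauder theory for all higher derivatives; your slab-wise interior Schauder bootstrap from the $C^0$ rate is a legitimate substitute, since the hypothesis already supplies $C^1$-smallness and hence uniform ellipticity on unit slabs. The most substantive divergence is the kernel: the paper disposes of it by invoking rigidity of the special Lagrangian sphere $\Sigma_i$ (no harmonic $1$-forms on $S^2$, so $\ker D_{\Sigma_i}=0$ for deformations inside $X$), while your framework keeps the translational zero modes of $B$ in view and treats them by hand. That is honest bookkeeping: the constant form $dt$, corresponding to translating the cylinder in the flat direction orthogonal to $v_i$, is a translation-invariant kernel element of the cross-sectional operator in either framework, and the $o(1)$ hypothesis is what pins the asymptotic translate; the paper's citation of \cite[Theorem 1(i)]{Adams1988} is doing exactly this work.

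One step of your plan, as literally stated, would fail: you assert that the kernel projection $\bar\nu_i$ decays exponentially because $\bar\nu_i''=\overline{Q}$ has a quadratically small right-hand side and $\bar\nu_i\to 0$. Those two inputs alone do not suffice: the ODE $a''=a^2$ admits the solution $a(t)=6t^{-2}$, which tends to zero with quadratically small right-hand side yet decays only polynomially. What rescues the argument is the integrability of the kernel, which you gesture at but never use: translations orthogonal to $v_i$ are ambient isometries preserving the calibration, so the equation is invariant under $\bar\nu_i\mapsto\bar\nu_i+\mathrm{const}$, and the projected nonlinearity $\overline{Q}$ therefore contains no underived $\bar\nu_i$ — it is controlled by $|\bar\nu_i'|^2$, $|\bar\nu_i'|\,|\bar\nu_i''|$, and exponentially decaying couplings to the gap modes. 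With that structure, $b=\bar\nu_i'$ satisfies $|b'|\lesssim |b|^2+e^{-2\mu_i t}$, and the convergence of $\int^{\infty}b$ (forced by $\bar\nu_i\to 0$) excludes the slowly decaying solutions, giving the exponential rate. Make this translation-invariance explicit in the delicate step you flagged, and your proposal closes.
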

\begin{proof}[{Sketch of the proof}]
Since each $\Sigma_i$ is a special Lagrangian sphere in $X$ with some phase, it does not admit any non-trivial infinitesimal special Lagrangian deformations. Therefore by \cite[Theorem 1(i)]{Adams1988}, there are constants $\mu_i>0$ such that
\begin{equation}
\label{eq adams}
|{\nu_i}|=O(e^{-\mu_i t}) \quad \text{and} \quad |{\nabla\nu_i}|=O(e^{-\mu_i t}) \quad \text{as} \quad t\to +\infty.
\end{equation}
It is worth noting that the hypothesis in the statement of that theorem concerns the kernel of the deformation operator for minimal submanifolds, which differs from our setting. However, the proof presented there \cite[pp. 234–235]{Adams1988} employs a `blowing up' method, can be adapted to our context as follows. If the Equation (\ref{eq adams}) does not hold then for $T\gg 1$, a subsequence of 
\[\hat \nu^k_i(t):=\varepsilon_k^{-1} \nu_i(t+kT), \quad \text{with} \quad \varepsilon_k:=\sup_{[kT, (k+1)T]}|\nu_i|>0\] converges locally in $C^2$ as $k\to +\infty$ over the end $L_i$ to a normal vector field $\hat \nu_i$ that satisfies the linearization equation at $L_i$ for special Lagrangians in $Z$:
\[\big(D_{L_i}:=\tfrac d{dt}+D_{\Sigma_i}\big)\hat \nu_i=0,\]
where $D_{\Sigma_i}$ is the deformation operator at $\Sigma_i$ for special Lagrangians in $X$. 
Moreover, the fact that $\ker D_{\Sigma_i}=0$ implies that $\hat \nu_i=\sum_{\lambda>0} e^{-\lambda t} \phi_{i,\lambda}$,
where $\phi_{i,\lambda}$ is a eigensection of $D_{\Sigma_i}$ corresponding to the eigenvalue $\lambda$. This would then lead to a contradiction as explained there. 

Moreover, as noted in \cite[Eq.~(2.2)]{Adams1988}, we have $|\nabla^2 \nu_i| = o(1)$ as $t \to +\infty$. By adapting the `blowing up' method used in the proof of \cite[Part~II, Theorem~6.6]{Simon1985} to our setting, it follows that
\begin{equation*}
|\nabla^2 \nu_i| = O(e^{-\mu_i t}) \quad \text{as} \quad t \to +\infty.
\end{equation*}

The exponential decay of all higher derivatives then follows from `Schauder regularity estimates in weighted spaces' \cite[Theorem~4.12]{marshal2002deformations}, since $\nu_i$ solves a quasi-linear equation of the form:
\[
a_1(\nu_i,\nabla \nu_i)\, \nabla^2 \nu_i + a_0(\nu_i,\nabla \nu_i) = 0,
\]
which arises by applying the operator $D_{L_i}$ to the special Lagrangian equation. A similar argument is presented in the asymptotically conical setting in \cite[Theorem~6.43]{marshal2002deformations}.
\end{proof}

\section{Proof of Theorem \ref{uniqueness}}\label{proof}

Let $L$ be a smooth special Lagrangian submanifold that satisfies the asymptotic condition of the generalized local Donaldson-Scaduto conjecture. In this section, we prove that $L$ is $U(1)$-invariant (Lemma \ref{U(1)-invariant}). Furthermore, we show that when $L$ is homeomorphic to an $n$-holed 3-sphere, the quotient $L/U(1)$ satisfies a graphicality condition (Lemma \ref{Lemma_submersion}). This result is then applied to prove Theorem \ref{main}.

\begin{lemma}\label{U(1)-invariant}
    The special Lagrangian $L$ is invariant under the $U(1)$-action.
\end{lemma}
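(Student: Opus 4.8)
The plan is to show that the special Lagrangian $L$ must inherit the $U(1)$-symmetry of its asymptotic ends. The key geometric fact is that each end $L_i$ is $U(1)$-invariant, and the $U(1)$-action on $Z$ is Hamiltonian with moment map $u_3$. The natural strategy is to consider the vector field $\xi$ generating the $U(1)$-action and its restriction to $L$, then show that $\xi$ is tangent to $L$ everywhere.

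First I would recall that the $U(1)$-action is Hamiltonian for $\omega = \omega_3 + dy_2 \wedge dy_1$ with moment map $u_3 : Z \to \mathbb{R}$, i.e. $\iota_\xi \omega = du_3$. Since $L$ is Lagrangian, $\omega|_L = 0$, so the restriction of $u_3$ to $L$ satisfies $d(u_3|_L) = (\iota_\xi \omega)|_L$. The crucial computation is to relate the tangency of $\xi$ to $L$ with the behavior of $u_3|_L$. The cleanest approach: consider $u_3$ restricted to $L$ as a function, and show it must be constant (or otherwise controlled) using that $L$ is special Lagrangian — hence minimal — combined with the asymptotic conditions. On each end $L_i = \Sigma_i \times (\mathbb{R}^+ \cdot \tilde{v}_i)$, the sphere $\Sigma_i$ projects under $\pi$ to the segment $[p_i, p_{i+1}]$ lying in the plane $u_3 = 0$, so $u_3$ is bounded (in fact decaying to the value on $\Sigma_i$) on the ends.

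The main step I would carry out is a maximum-principle argument analogous to Lemma \ref{max-p}. Since $L$ is minimal (being special Lagrangian) and $u_3$ is the moment map, I would check whether $u_3|_L$ is harmonic or subharmonic. More precisely, on a minimal submanifold the restriction of a function $f$ satisfies $\Delta_L f = \operatorname{tr}_L(\operatorname{Hess} f) + \langle \nabla f, H\rangle = \operatorname{tr}_L(\operatorname{Hess}_Z u_3)$, and one needs to understand $\operatorname{Hess} u_3$ for the moment map. I expect $u_3|_L$ to be harmonic on $L$ and to converge to a constant on each end; the maximum principle would then force $u_3|_L$ to attain its extrema in the interior unless it equals that constant, forcing tangency $\langle du_3, \cdot\rangle|_L = 0$ appropriately, which via $\iota_\xi\omega = du_3$ and $\omega|_L = 0$ gives that $\xi$ is tangent to $L$.

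The hard part will be promoting the pointwise tangency of the Killing field $\xi$ to genuine $U(1)$-invariance of $L$ as a set. Once $\xi$ is shown tangent to $L$ everywhere, invariance follows because the flow of a Killing field tangent to $L$ preserves $L$ — the integral curves of $\xi$ starting on $L$ stay on $L$ — so $L$ is a union of $U(1)$-orbits, i.e. $U(1)$-invariant. The delicate technical point is ensuring $\xi$ is everywhere tangent, including near the fixed-point set of the action (the points $q_i$ over $p_i$) and verifying the asymptotic decay from the previous lemma is strong enough to justify the maximum principle on the noncompact $L$; I would handle the noncompactness by using the exponential decay of $\nu_i$ established in Equation (\ref{eq exponential decay}) to control $u_3|_L$ and its gradient at infinity, ensuring no contribution to the boundary terms escapes to the cylindrical ends.
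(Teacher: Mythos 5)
Your overall strategy coincides with the paper's: show that $u_3|_L$ is harmonic, use its decay to zero along the ends together with the maximum principle to conclude $u_3 \equiv 0$ on $L$, and then deduce from $du_3 = \iota_{\partial_\theta}\omega$ and the Lagrangian condition that the generating vector field $\partial_\theta$ (your $\xi$) is tangent to $L$, hence that $L$ is $U(1)$-invariant (this last step, which you flag as ``the hard part,'' is in fact routine, exactly as you then argue). The gap is at the crux: you never establish harmonicity of $u_3|_L$ --- you only ``expect'' it --- and the route you sketch, namely computing $\Delta_L(u_3|_L) = \operatorname{tr}_{TL}(\operatorname{Hess}_Z u_3)$ using only minimality of $L$ and the Hamiltonian/Killing property of the action, cannot succeed with only those inputs. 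Indeed, in $\mathbb{C}^3$ with its standard Calabi-Yau structure, the action $e^{i\theta}\cdot(z_1,z_2,z_3)=(e^{i\theta}z_1,z_2,z_3)$ is isometric and Hamiltonian with moment map $f=\tfrac12|z_1|^2$ (up to sign), and $L=\mathbb{R}^3$ is special Lagrangian, yet $f|_L=\tfrac12 x_1^2$ is not harmonic. What fails in this example, and what your proposal never invokes, is preservation of the holomorphic volume form $\Omega$, not just of $\omega$ and the metric.

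That is precisely the ingredient the paper uses. Since the $U(1)$-action on $Z$ preserves the full Calabi-Yau structure ($\omega_1$, $\omega_2$, and hence $\Omega=(\omega_1+i\omega_2)\wedge(dy_2+idy_1)$ are $U(1)$-invariant), the family $e^{i\theta}\cdot L$ is a deformation of $L$ through special Lagrangians, generated by the Hamiltonian $u_3$. By McLean's deformation theory, the 1-form associated to an infinitesimal special Lagrangian deformation is harmonic; here that 1-form is $(\iota_{\partial_\theta}\omega)|_L=d(u_3|_L)$ (the tangential part of $\partial_\theta$ contributes nothing because $L$ is Lagrangian), so $u_3|_L$ is harmonic --- this is the content of the citation \cite[Proposition 6.46]{marshal2002deformations} in the paper's proof. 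Once this step is supplied, your argument closes as written: harmonicity plus decay to the common limit $0$ on every end (each $\Sigma_i$ lies over $[p_i,p_{i+1}]\subset\{u_3=0\}$) gives $u_3\equiv 0$ by the maximum principle --- no integration by parts or control of boundary terms at infinity is needed, so the exponential decay estimates you invoke are superfluous --- and then $\omega(\partial_\theta, v)=0$ for all $v\in TL$ forces $\partial_\theta\in TL$ because $TL$ is maximal isotropic, whence the flow preserves $L$.
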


\begin{proof}
    The $U(1)$-action on $L$ generates a 1-parameter family of special Lagrangians $e^{i \theta} \cdot L$ with $\theta \in [0, 2 \pi)$. The Hamiltonian function generating this 1-parameter deformation family of $L$ is the moment map coordinate $u_3 : Z \to \mathbb{R}$; i.e., $du_3 = \iota_{\partial_{\theta}} \omega$, where $\partial_{\theta}$ is the vector field associated to the $U(1)$-action. This shows $u_3$ is a harmonic function on $L$
    \cite[Proposition 6.46]{marshal2002deformations}.
    Since $L$ is asymptotic to cylinders $L_i$, we know $u_3$ decays to zero along all the ends of $L$, so by the maximum principle $u_3 = 0$ on $L$.  Thus the $U(1)$ vector field $\partial_{\theta}$ is tangent to $L$, hence, $L$ is $U(1)$-invariant.
\end{proof}

The $U(1)$-moment map associated to the symplectic form $\omega$ on $Z$ is $u_3$, which must be constant on the $U(1)$-invariant Lagrangian $L$, and this constant is zero since it is zero on the asymptotic cylindrical models. The dimensionally reduced special Lagrangian in the symplectic quotient is
\begin{align*}
    L_{\text{red}} := L/U(1) \subset Z_{\text{red}} := u_3^{-1}(0)/U(1) = \mathbb{R}^2_{(u_1, u_2)} \times \mathbb{R}^2_{(y_1,y_2)}.
\end{align*}   
An essential property of this $U(1)$-action on $Z$, and consequently on $L$, is that each orbit of this action is either free or a fixed point. Let
    \begin{align*}
        L_{\text{free}} := L \cap \left( (X \setminus \{q_1, \ldots, q_n\}) \times \mathbb{R}^2 \right), \quad \quad
        L_{\text{fix}} := L \cap (\{q_1, \ldots, q_n\} \times \mathbb{R}^2).
    \end{align*}
    We have the disjoint union $L = L_{\text{free}} \cup L_{\text{fix}}$.

The $U(1)$-invariance simplifies the special Lagrangian conditions for $L \subset X \times \mathbb{R}^2$ to the following equations:  
\begin{align}\label{SLeq}
        V du_1 \wedge du_2 = dy_1 \wedge dy_2, \quad \quad \text{and}  \quad \quad du_1 \wedge dy_1 + du_2 \wedge dy_2 = 0.
    \end{align}

\begin{lemma}
\label{Lemma_manifold with boundary}
{$L_{\text{red}}$ is a smooth 2-manifold with boundary, where $\partial L_{\text{red}} = L_{\text{fix}}/U(1) \cong L_{\text{fix}}$. Furthermore, it has $n$ boundary components that are homeomorphic to $\mathbb{R}$, connecting the $L_i$ end to the $L_{i+1}$ end for $i = 1, 2, \dots, n$. Additionally, if $L$ is simply connected, then $L_{\text{red}}$ is simply connected and has no genus and no circle boundary components.} 
\end{lemma}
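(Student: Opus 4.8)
The plan is to analyze the $U(1)$-quotient map $\pi_{\mathrm{red}}\colon L\to L_{\mathrm{red}}$ and the structure of its fibers near the fixed locus. Since the $U(1)$-action on $Z$ has only free orbits and isolated fixed points (the latter occurring exactly where $L$ meets $\{q_1,\dots,q_n\}\times\mathbb{R}^2$), the quotient $L_{\mathrm{free}}/U(1)$ is automatically a smooth $2$-manifold (a free quotient of the smooth $3$-manifold $L_{\mathrm{free}}$). The content is therefore local, at each fixed point $q\in L_{\mathrm{fix}}$: I would use the fact that near such a $q_i$ the Gibbons-Hawking space $X$ looks like $\mathbb{C}^2$ with the standard $U(1)$-action $e^{it}\cdot(z_1,z_2)=(e^{it}z_1,e^{it}z_2)$, whose quotient by the Hopf-type circle action is the half-space picture giving a manifold-with-boundary chart. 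Concretely, $u_3=0$ cuts out the sphere $S^3\subset\mathbb{C}^2$ on which $U(1)$ acts with quotient the $2$-sphere, and $L$ being $U(1)$-invariant, Lagrangian, and smooth through $q$ forces $L/U(1)$ to acquire a boundary arc passing through the image of $q$. The first step is thus to set up this normal-form chart and verify that $L_{\mathrm{red}}$ is a topological (and then smooth) $2$-manifold with boundary, with $\partial L_{\mathrm{red}}=L_{\mathrm{fix}}/U(1)$; since the action on $L_{\mathrm{fix}}$ is trivial, $L_{\mathrm{fix}}/U(1)\cong L_{\mathrm{fix}}$.

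For the global count of boundary components, I would examine the asymptotic ends. On the end modeled on $\Sigma_i\times\{y:(p_{i+1}-p_i)\cdot y=c_i\}$, the sphere $\Sigma_i=\pi^{-1}[p_i,p_{i+1}]$ is $U(1)$-invariant with exactly two fixed points $q_i,q_{i+1}$ (the poles over $p_i$ and $p_{i+1}$), so $\Sigma_i/U(1)$ is an arc joining these two points. Hence each cylindrical end $L_i$ quotients to a half-strip whose boundary consists of two rays emanating from the images of $q_i$ and $q_{i+1}$. Tracking how these boundary rays close up across the compact part, each fixed point $q_i$ is shared between the $L_{i-1}$ end and the $L_i$ end, so the fixed-point arcs string together into boundary components; the cyclic arrangement of the $n$ ends then gives $n$ boundary components each running from the $L_i$ end to the $L_{i+1}$ end and homeomorphic to $\mathbb{R}$. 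This is a matching/bookkeeping argument once the local boundary structure is established.

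For the final sentence, I would argue by elementary surface topology together with the long exact sequence (or van Kampen) relating $L$ and $L_{\mathrm{red}}$. The quotient map $L_{\mathrm{free}}\to L_{\mathrm{free}}/U(1)$ is a principal circle bundle, and over the boundary the circles collapse to points; thus $L$ is obtained from the circle bundle over $L_{\mathrm{red}}$ by filling in disks along $\partial L_{\mathrm{red}}$, and one checks that $\pi_1(L)\twoheadrightarrow\pi_1(L_{\mathrm{red}})$ is surjective (a loop in $L_{\mathrm{red}}$ lifts, since the fiber circle is connected). Consequently $L$ simply connected forces $L_{\mathrm{red}}$ simply connected. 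A simply connected compact-type surface with boundary has genus zero, and a simply connected surface cannot have a circle boundary component bounding a $U(1)$-orbit-free piece without creating a nontrivial loop upstairs; so all boundary components are the noncompact $\mathbb{R}$-arcs and none are circles. I expect the main obstacle to be the local analysis at the fixed points: one must rule out that $L$ passes through $q_i$ in a degenerate way (e.g.\ tangentially along the fiber direction or with a cone singularity in the quotient), and show the smooth special Lagrangian condition forces the clean manifold-with-boundary normal form. The Lagrangian condition \eqref{SLeq}, specifically that $L$ projects injectively enough in the $(u_1,u_2)$ directions near a fixed point, is what I would use to exclude these degeneracies and pin down the half-disk model.
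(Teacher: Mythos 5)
Your outline tracks the paper's proof (local structure at the fixed points, matching of the fixed rays along the ends, path lifting for simple connectivity, then surface topology), but the central step --- that $L_{\text{red}}$ is a smooth $2$-manifold \emph{with boundary} near the image of a fixed point --- is precisely the step you defer, and the tool you propose for it cannot work. You suggest excluding degenerate behavior at $q_i$ by using that ``$L$ projects injectively enough in the $(u_1,u_2)$ directions near a fixed point.'' But $u_1,u_2$ (like $u_3$) are moment maps for the $U(1)$-action, so $du_1=du_2=0$ at every fixed point of $L$; near $L_{\text{fix}}$ the projection $\pi_u$ collapses an entire boundary arc to the single vertex $p_i$, and the paper even remarks that $L_{\text{red}}$ is \emph{not} a local graph at these points. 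The paper's actual argument is softer and purely equivariant: apply the slice theorem to the $U(1)$-action on $L$ itself; since the action preserves $\mathrm{Re}(\Omega)$, it acts on $T_xL$ by an orientation-preserving real $3$-dimensional representation, which forces the splitting $T_xL\cong\mathbb{R}\oplus\mathbb{C}$ with $U(1)$ acting on the $\mathbb{C}$-factor, and with weight one because the action has no nontrivial finite isotropy. The equivariant exponential chart composed with $z\mapsto|z|^2$ then gives the half-plane chart $[0,\epsilon)\times I_y$. No information about how $L$ sits relative to the ambient coordinates is needed, and this representation-theoretic point (orientation-preservation plus weight one) is exactly what is absent from your proposal. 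Relatedly, your ambient normal form is off: near $q_i$ the level set $\{u_3=0\}$ is not ``the sphere $S^3\subset\mathbb{C}^2$'' but a real hypersurface through the fixed point, the cone over the Clifford torus $\{|z_1|=|z_2|\}$, so the Hopf-quotient picture you invoke does not describe the reduction.

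A second, smaller gap is the exclusion of circle boundary components. Your justification --- that a circle boundary component would ``create a nontrivial loop upstairs'' --- is not correct as stated: a disk is simply connected and has a circle boundary, and nothing a priori prevents $L_{\text{fix}}$ from containing circles (these are exactly the components counted by $b$ in Theorem \ref{Thm_PossibleTopology} of the Appendix, so they must be ruled out using the hypothesis, not by a general principle). The paper's argument: compactify each end of $L_{\text{red}}$ separately at infinity to get a compact simply connected surface with boundary, hence a disk; a disk has exactly one boundary circle, which already contains all $n$ noncompact arcs of $\partial L_{\text{red}}$, so there is no room for genus or for any additional circle boundary component. Your path-lifting argument that $\pi_1(L)\twoheadrightarrow\pi_1(L_{\text{red}})$, and your bookkeeping that the two fixed rays over each $p_i$ (one in the $L_{i-1}$ end, one in the $L_i$ end) join into a single properly embedded copy of $\mathbb{R}$, do match the paper's Step 2 and are fine.
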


We see a schematic presentation of $L_{\text{red}}$ in $Z_{\text{red}}$ in Figure \ref{LDSC} in the case $n=3$.

\begin{figure}[H]  
\includegraphics[width=9cm]{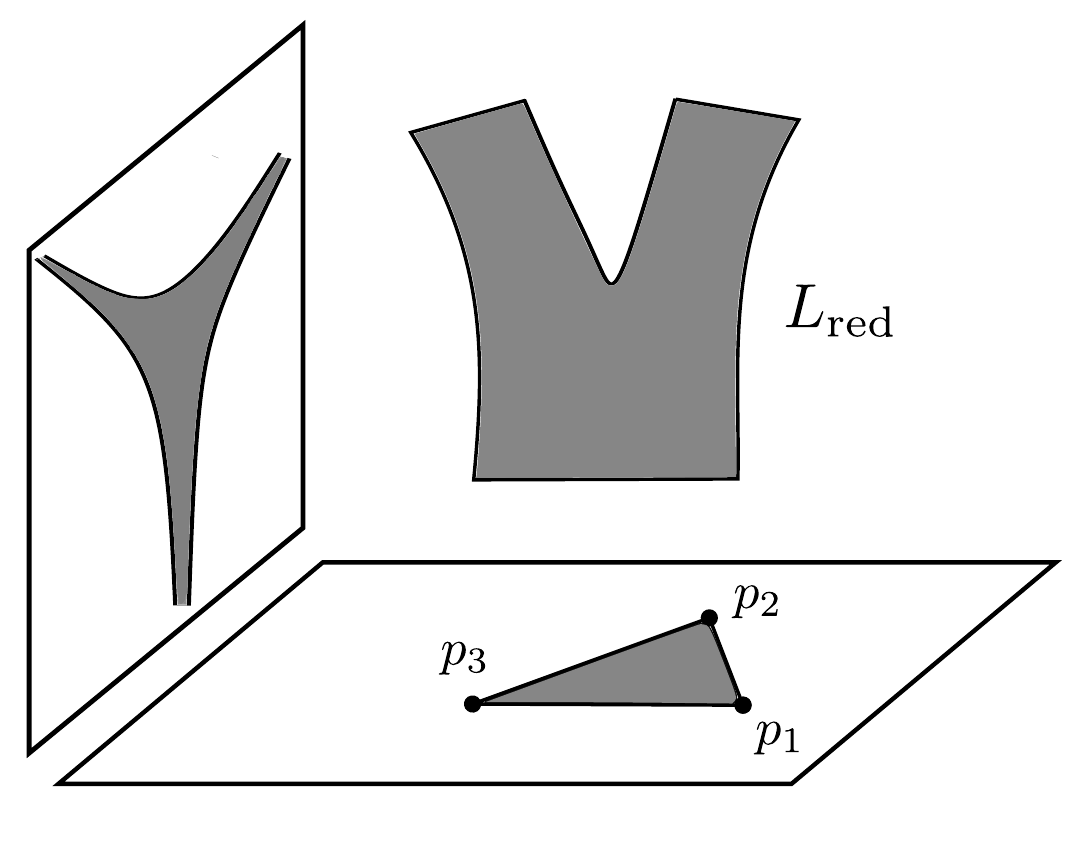}
\centering
  \caption{$L_{\text{red}}$ in $Z_{\text{red}}$ in the case $n=3$.}\label{LDSC}
\end{figure}

\begin{proof}

\textbf{Step 1} ($L_{\text{red}}$ is a smooth $2$-manifold with $\partial L_{\text{red}} = L_{\text{fix}}/U(1) \cong L_{\text{fix}}$).  Let $O_x$ be the $U(1)$-orbit of a point $x\in L$, and let $\Gamma_x$ be the stabilizer subgroup of $U(1)$ at $x$. By the slice theorem \cite[Theorem 2.4.1]{Duistermaat_2000}, there exists a $\Gamma_x$-invariant open neighborhood $V_x$ of $x$ in $N_xO_x\subset T_xL$ and a $U(1)$-invariant open neighborhood $U_x$ of $x$ in $L$ such that the exponential map
\[\operatorname{exp}^L_x:U(1)\times_{\Gamma_x} V_x\to U_x\]
is a $U(1)$-equivariant diffeomorphism. This implies that $L_{\text{free}}$ is an open smooth submanifold of $L$, and $L_{\text{free}}/U(1)$ is a smooth submanifold of $Z_{\text{red}}$ of dimension $2$. On the other hand, if $x\in L_{\text{fix}}$, then $O_x=\{x\}$, $V_x$ is $U(1)$-invariant, $U(1)\times_{\Gamma_x} V_x=V_x$, and $\operatorname{exp}^L_x: V_x\to U_x$ is a $U(1)$-equivariant diffeomorphism. Since the $U(1)$ action preserves $Re(\Omega)$, its action on $T_xL$ defines an orientation-preserving real $3$-dimensional representation. Consequently, it decomposes as $T_xL\cong \mathbb R\oplus \mathbb C$, where $U(1)$ acts trivially on $\mathbb R$ and acts on $\mathbb C$ by standard complex multiplication. Crucially, the latter action cannot be trivial, since $L_{\text{fix}}$  is locally planar, and the weight must be one, as the $U(1)$-action has no non-trivial finite isotropy subgroups. This implies that $L_{\text{fix}}$ is a $1$-dimensional smooth submanifold of $L$.

Next, we show that $L_{\text{red}}$ is a smooth manifold with boundary. Suppose $x=(q_i,y)\in L_{\text{fix}}$. Let $I_x= \{q_i\}\times I_y$ be a small open neighborhood of $x$ in $L_{\text{fix}}$. Then, there is a $U(1)$-invariant open tubular neighborhood $D_{q_i}\times I_y$ in $NI_x$ of $I_x$, with $D_{q_i}\subset T_{q_i}X$ such that $$\operatorname{exp}^L_{I_x}: D_{q_i}\times I_y\hookrightarrow L,$$ 
is a $U(1)$-equivariant tubular neighborhood map. With the quotient map $h_0:\mathbb C\to [0,\infty)$ given by $z\mapsto |z|^2$, the above tubular neighborhood maps descend to maps
$$[0,\epsilon_i)\times I_y\to L/U(1).$$
These maps induce a smooth manifold with boundary structure on $L/U(1)$ and $\partial L_{\text{red}}$ is $L_{\text{fix}}/U(1)\cong L_{\text{fix}}$.  

\textbf{Step 2} (Structure of $L_{\text{fix}}$). {Along each asymptotic cylinder modeled on $L_i$, there are two components of $L_{\text{fix}}$, since the $S^2$ cross section has two $U(1)$ fixed points, lying above $p_i$ and $p_{i+1}$. Since $L_{\text{fix}}$ is a one-dimensional submanifold, the components in the ends $L_i$ and $L_{i-1}$ that are projecting to the same $p_i$ must be part of one boundary component of $L_{\text{fix}}$ homeomorphic to $\mathbb R$. In this way, $L_{\text{fix}}$ has $n$ boundary components that are homeomorphic to $\mathbb{R}$, connecting the $L_i$ end to the $L_{i+1}$ end for $i = 1, 2, \dots, n$.}  

Additionally, if $L$ is simply connected, then $L_{\text{red}}$ is also simply connected. This follows from the following path lifting property for the quotient map $L \rightarrow L_{\text{red}}$: any given loop in $L_{\text{red}}$ can be lifted to a path connecting some point $x$ and $e^{i\theta}x$, which is further homotopic to the standard path $\{e^{it\theta}x:t\in [0,1]\}$ because $L$ is simply connected. The latter path projects to a constant, so the loop we started with in $L_{\text{red}}$ is contractible.

We now prove that $L_{\text{red}}$ has no genus or circle boundary components. This can be seen by compactfying each end of $L_{\text{red}}$ separately at infinity so that the compactification $\overline{L_{\text{red}}}$ is a compact simply connected surface with boundary. This has no genus and only one circle boundary component, which also contains all the above $n$ boundary components of $L_{\text{red}}$. This completes the proof.
\end{proof}

We now proceed to show that $L_{\text{red}}$ is the graph of a smooth map over the {interior of the convex polygon}. First, we control the image of $L_{\text{red}}$ under the projection map {\(\pi_u:Z_\text{red}\to \mathbb R^2_{(u_1,u_2)}\).}

\begin{lemma}\label{lem-proj-to-polygon}
    The projection of $L$ to the $(u_1, u_2)$ plane is contained in the union of the interior of the polygon domain and its vertices. 
\end{lemma}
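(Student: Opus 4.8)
The plan is to phrase the containment as a statement about the holomorphic projection $\pi_u\colon Z_{\mathrm{red}}\to\mathbb{R}^2_{(u_1,u_2)}$ restricted to the reduced surface $L_{\mathrm{red}}$, and to confine its image by comparison with the supporting lines of the convex polygon $P$. Since $u_3\equiv 0$ on $L$ we already have $\pi_u(L)\subset\mathbb{R}^2_{(u_1,u_2)}$, so the first task is to record the boundary and asymptotic data. By Lemma~\ref{Lemma_manifold with boundary} the compactification $\overline{L_{\mathrm{red}}}$ obtained by capping off the ends at infinity is a closed disk, and $\pi_u$ extends continuously to its boundary: the finite boundary $L_{\mathrm{fix}}$ is sent to the vertices $\{p_1,\dots,p_n\}$ (as $q_i\mapsto p_i$), while the circle at infinity of the $i$-th end is sent to the edge $[p_i,p_{i+1}]$ via $s\mapsto p_i+s\,v_i$, using that the model end is $\Sigma_i\times(\mathbb{R}^+\cdot\tilde v_i)$ with $\pi(\Sigma_i)=[p_i,p_{i+1}]$. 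Concatenating these pieces, $\pi_u|_{\partial\overline{L_{\mathrm{red}}}}$ traverses $\partial P$ exactly once.

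Next I would fix an edge $i$ with outward unit normal $n_i$, so that $P=\bigcap_i\{\,n_i\cdot u\le b_i\,\}$ with $b_i=n_i\cdot p_i=n_i\cdot p_{i+1}$, and consider $\rho_i:=n_i\cdot u=\mathrm{Re}(\bar n_i z_1)$, where $z_1=u_1+iu_2$ is holomorphic on $Z$. On the ends and on $L_{\mathrm{fix}}$ one has $\rho_i\le b_i$: along end $i$ it tends to the constant $b_i$ because $n_i\perp v_i$, while along every other end and at all vertices convexity of $P$ gives $n_i\cdot p_j\le b_i$. Thus $\limsup\rho_i\le b_i$ over $\partial\overline{L_{\mathrm{red}}}$, and the whole lemma reduces to propagating this bound to the interior of $L_{\mathrm{red}}$; intersecting over $i$ then gives $\pi_u(L)\subseteq\overline P$.

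This propagation is the main obstacle, and it cannot be done by a naive maximum principle. Indeed $\rho_i$ is pluriharmonic on $Z$ and $L$ is minimal, but restricting a pluriharmonic function to a minimal Lagrangian only yields $\mathrm{Hess}\,\rho_i(e_a,e_a)=-\mathrm{Hess}\,\rho_i(\mathcal J e_a,\mathcal J e_a)$, which leaves $\Delta_L\rho_i$ of indefinite sign; so $\rho_i|_L$ need not be sub- or super-harmonic. I would instead control the image by orientation and degree. The special Lagrangian condition calibrates $L_{\mathrm{red}}$ by the positive form $(du_1\wedge dy_2-du_2\wedge dy_1)|_L>0$, and wherever $\pi_u$ is a local diffeomorphism the reduced equations \eqref{SLeq} present $L_{\mathrm{red}}$ as a graph $y=\nabla\phi$ with $\det\mathrm{Hess}\,\phi=V>0$ and $(du_1\wedge dy_2-du_2\wedge dy_1)|_L=(\Delta\phi)\,du_1\wedge du_2$. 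Hence $\pi_u$ is orientation-preserving precisely where $\phi$ is convex.

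The crux, which I expect to be the hardest point, is to rule out orientation-reversing (locally concave) regions. Granting this, the disk $\overline{L_{\mathrm{red}}}$ together with the degree-one boundary behavior on $\partial P$ forces $\pi_u$ to be a degree-one branched covering onto $\overline P$, so its image cannot leave $\overline P$. To secure the no-fold property I would use the asymptotic convexity of the model ends (where $\phi$ is convex) together with a connectedness argument on the disk: an orientation-reversing region, bounded by a fold curve along which $\det\mathrm{Hess}\,\phi=V\neq 0$ cannot vanish, is incompatible with $\pi_u|_{\partial\overline{L_{\mathrm{red}}}}$ winding once around $\partial P$ with the correct orientation. This is the same circle of PDE-plus-topology ideas underlying the subsequent graphicality statement. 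Finally, to upgrade $\pi_u(L)\subseteq\overline P$ to the sharper conclusion (interior together with vertices), I would note that $L_{\mathrm{fix}}$ maps exactly onto the vertices, while on $L_{\mathrm{free}}$ the exponential decay of the end normal fields combined with strict supporting-line inequalities in the interior shows that the open edges are attained only in the asymptotic limit, so every finite point of $L_{\mathrm{free}}$ lands in the open polygon.
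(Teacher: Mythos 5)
Your reduction to supporting half-planes, your boundary-at-infinity bookkeeping, and your observation that the naive maximum principle fails (the restriction of a Gibbons--Hawking coordinate to $L$ is not harmonic) are all sound, and the identity $(du_1\wedge dy_2-du_2\wedge dy_1)|_{L_{\text{red}}}=(\Delta\phi)\,du_1\wedge du_2$ is correct. But the proposal has a genuine gap at exactly the point you flag as the crux: ruling out orientation-reversing regions of $\pi_u$. This cannot simply be granted, because it is essentially the content of Lemma \ref{Lemma_submersion}, which in the paper sits strictly \emph{downstream} of the present lemma: its proof uses graphicality near the open edges (Lemma \ref{Graphicality near the open edges}), whose proof in turn invokes the positivity $u_1\geq 0$ -- i.e.\ Lemma \ref{lem-proj-to-polygon} itself -- together with the Harnack inequality and the asymptotic formula (\ref{eq asymp formula 1 of SL}). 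Your sketch also appeals to ``asymptotic convexity of the model ends,'' but the model ends are vertical cylinders projecting onto the edges, where $du_1\wedge du_2\equiv 0$: the ends limit onto the critical set of $\pi_u$, and proving that the actual ends are eventually graphical (hence convex) is again Lemma \ref{Graphicality near the open edges}, downstream of this one. So the argument is circular as structured, and the two-sentence fold-curve/winding heuristic does not close it: with a Jacobian of undetermined sign, degree zero over the exterior of the polygon is compatible with cancelling pairs of preimages, which is precisely what must be excluded. A further, smaller gap: even granting that the image lies in the closed polygon, excluding points of $L_{\text{free}}$ over the \emph{open edges} needs a boundary-type strong maximum principle argument, not ``exponential decay plus strict supporting-line inequalities,'' which as phrased assumes what is to be shown. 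Finally, your route needs $L$ simply connected (to compactify $L_{\text{red}}$ to a disk via Lemma \ref{Lemma_manifold with boundary}), whereas the paper's lemma carries no topological hypothesis.

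For comparison, the paper's proof is short and self-contained, and shows that a maximum principle \emph{does} work -- just not by restricting an ambient harmonic function. If the lemma fails, then $\inf_L u_1$ (in a rotated frame) is attained at a point $p$ of $L_{\text{free}}$, where $du_1|_p=0$; positivity of $\mathrm{Re}(\Omega)=\theta\wedge(du_1\wedge dy_2-du_2\wedge dy_1)$ then forces $du_2\wedge dy_1\neq 0$ at $p$, so $L_{\text{red}}$ is locally a graph of $(u_1,y_2)$ over the \emph{mixed} coordinates $(u_2,y_1)$. In those coordinates the system (\ref{SLeq}) yields the second-order elliptic equation (\ref{eq elliptic pde for max principle}) for $u_1$, and the strong maximum principle plus real analyticity give $u_1\equiv\mathrm{const}$ on $L$, contradicting the asymptotics. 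This handles both cases (image leaving the half-plane, and an interior point over an open edge) at once. If you want to salvage your degree-theoretic picture, you would first need an independent proof that the Jacobian of $\pi_u$ cannot change sign; within the paper's logic that fact is a consequence, not an ingredient, of this lemma.
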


\begin{proof}
        The interior of the polygon domain is the intersection of open half-planes, so it suffices to prove that $L$ projects inside any of the half-planes if it is not projecting to a vertex $p_i$. By a rotation of coordinates, without loss of generality, we can assume that the half-plane is $\{u_1 \geq 0\}$. We suppose the contrary that either (i) $\inf_{L} \{u_1\} = a < 0$  or (ii) there is a point $p \in L$ that projects to the open edge with $\inf_L \{u_1\} = 0$. If $\inf_L {u_1} \leq 0$, then, as the asymptotic ends of $L$ project to arbitrarily small neighborhoods of the edges of the polygon, in both cases (i) and (ii), $\inf_L {u_1}$ is attained at some interior point $p \in L$. At $p$, the minimum condition gives ${du_1}_{|_p} = 0$. The volume form on the special Lagrangian is $Re(\Omega) = \theta \wedge (du_1 \wedge dy_2 - du_2 \wedge dy_1) > 0$.

        Therefore, at the point $p$, we must have $du_2 \wedge dy_1 \neq 0$. By the implicit function theorem, in a small neighborhood of $p$, $L/U(1)$ can be locally expressed as the graph of $(u_1, y_2)$ as a function of $(u_2, y_1)$. The $U(1)$-invariant special Lagrangian Equations (\ref{SLeq}) for $(u_1, y_2)$ in terms of $(u_2, y_1)$ take the following form:
    \begin{equation}\label{eqn:ellipticsystem}
        V \frac{\partial u_1}{\partial y_1}  = \frac{\partial y_2}{\partial u_2}, 
        \quad \quad \text{and}  \quad \quad
        \frac{\partial u_1}{\partial u_2}  = -\frac{\partial y_2}{\partial y_1}.
    \end{equation}
    Thus by differentiation, we get 
    \begin{equation*}
        \frac{\partial^2 u_1}{\partial u_2^2}
        =
        - \frac{\partial^2 y_2}{\partial u_2 \partial y_1} 
        = - \frac \partial {\partial y_1} \Big(V \frac{\partial u_1} {\partial y_1} \Big),
    \end{equation*}
    namely, we have a second-order elliptic PDE for $u_1$,
    \begin{equation}\label{eq elliptic pde for max principle}
        V \frac{\partial^2 u_1}{\partial y_1^2} + \frac{\partial^2 u_1}{\partial u_2^2} + \frac{\partial V}{\partial y_1 } \frac{\partial u_1}{\partial y_1} = 0.
    \end{equation}
    By assumption, \( u_1 \) attains a local minimum at \( p \), and by the strong maximum principle, \( u_1 \) must be equal to the constant \( a \) in the local chart of \( L \). Since \( u_1 \) is real analytic, this implies that \( u_1 = a \) on \( L \), which contradicts the asymptotic behavior at infinity. 
\end{proof}

\begin{remark}\emph{
    The above proof can be compared to the standard fact that on minimal surfaces in $\mathbb{R}^n$, the coordinate functions $u_i$ on $\mathbb{R}^n$ satisfy the maximum principle since $u_i$ are harmonic functions on the minimal surface. In our case, the Gibbons-Hawking coordinates $u_1, u_2$ have non-trivial Hessian on the ambient space, so the harmonic function argument does not work; however, the maximum principle still holds by utilizing the full strength of the special Lagrangian condition.}
\end{remark}

\begin{remark}\emph{
    Note that the fixed points of $L$ project to the vertices of the polygon, and this does not contradict the maximum principle, as $L_{\text{red}}$ cannot be realized as a local graph at these points.}
\end{remark}

Lemma \ref{proper} proves a properness property for the projection $\pi_u$. We later use this to define the degree of the projection map, which is essential in the proof of graphicality.

\begin{lemma}[Properness of the projection away from vertices] \label{proper} The preimage of any compact subset $K$ of the open polygon is contained in a compact subset of the special Lagrangian. 
\end{lemma}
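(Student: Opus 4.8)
The plan is to exploit the asymptotically cylindrical structure of $L$ together with the elementary observation that each model end $L_i$ projects under $\pi_u$ onto an edge of the polygon, which lies on the boundary and is therefore separated by a definite distance from any compact set sitting inside the open polygon. The whole statement should reduce to the fact that a sequence escaping to infinity in $L$ must run out one of the ends, where its $u$-projection is pinned near an edge.

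First I would record that every point of $L$ outside a fixed compact set $K_L$ lies on one of the finitely many ends, where $L$ is a normal graph $j_i(\operatorname{graph}\nu_i)$ over the model cylinder $\Sigma_i \times ((R,\infty)\cdot v_i)$ with $|\nu_i|,\,|\nabla\nu_i| = o(1)$ as $t\to+\infty$ by Equation (\ref{eq-def-acyl-asso}) (indeed exponentially small by Equation (\ref{eq exponential decay})). Next I note that $\pi_u$ sends $\Sigma_i = \pi^{-1}[p_i,p_{i+1}]$, and hence the entire model cylinder $L_i$, onto the closed edge $e_i := [p_i,p_{i+1}]$, which is contained in the boundary $\partial P$ of the polygon $P$. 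Since $K$ is compact and contained in the open polygon, $\delta := \operatorname{dist}(K,\partial P) > 0$, so in particular $\operatorname{dist}(K,e_i) \ge \delta$ for every $i$.

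With these in hand I would argue by contradiction: if $\pi_u^{-1}(K)\cap L$ were not contained in a compact subset of $L$, there would be a sequence $x_k\in L$ with $\pi_u(x_k)\in K$ leaving every compact subset. Since $L\setminus K_L$ is the union of the ends and each end is exhausted by the sublevel sets $\{t\le T\}$, after passing to a subsequence $x_k$ lies on a single end $L_i$ with $t(x_k)\to+\infty$. Because $j_i$ is a fixed tubular neighborhood map and $\nu_i\to 0$ in $C^1$, continuity of $\pi_u\circ j_i$ forces $\operatorname{dist}(\pi_u(x_k),e_i)\to 0$, so $\operatorname{dist}(\pi_u(x_k),e_i)<\delta$ for $k$ large, contradicting $\pi_u(x_k)\in K$. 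Hence $\pi_u^{-1}(K)\cap L$ is bounded in the cylindrical parameter, and being closed in $L$, it is contained in a compact subset.

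The step I expect to require the most care is the passage from ``$\nu_i$ decays'' to ``the $u$-projection converges to the edge'': one must check that the normal perturbation, which a priori carries components in the $(u_1,u_2)$ directions, displaces the $u$-projection by an amount controlled by $|\nu_i|$. This follows from the uniform $C^1$-closeness of the graph to the model cylinder on the fixed tubular neighborhood, but since it is the only place where the geometry of the embedding genuinely enters, I would state it explicitly rather than treat it as automatic. Everything else is a routine consequence of properness of the exhaustion of an asymptotically cylindrical manifold.
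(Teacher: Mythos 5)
Your proof is correct and takes essentially the same approach as the paper: a sequence of points diverging to infinity in $L$ must lie in one of the ends, where the asymptotically cylindrical condition forces the $u$-projection into a small neighborhood of the corresponding edge, hence at positive distance from the compact set $K$ in the open polygon. The paper's proof is a one-sentence version of exactly this argument, so your more detailed write-up (including the observation that the normal perturbation's effect on the $u$-projection is controlled by $|\nu_i|$) is a faithful expansion rather than a different route.
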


\begin{proof}
Any sequence of points on $L$ diverging to infinity lands in some end of $L$, so by the asymptotic cylindrical assumption, the projection lands in a small neighborhood of the corresponding edge of the polygon, which is eventually disjoint from $K$. 
\end{proof}

To prove the graphicality, at least near the open edges, we need a more precise description of the asymptotic behavior of the special Lagrangians.

\begin{lemma}[Asymptotic formulas of $L$]\label{thm_Asymp formula of SL} The normal vector fields $\nu_i$ over the end of $L_i$ from equation (\ref{eq-def-acyl-asso}) satisfy the following asymptotic formula: there are constants $\lambda_i>0$ and non-trivial translation invariant normal vector fields $\xi_i$ over $L_i$ such that for each $k\geq 0$,
\begin{equation}\label{eq asymp formula 1 of SL} 
{\nabla^k \big(\nu_i-e^{-\lambda_i t}\xi_{i}\big)}=o(e^{-\lambda_i t}), \quad \text{ as } \quad t \to \infty.
\end{equation}
\end{lemma}

\begin{proof} 
First we observe that $|\nu_i|$ cannot decay faster than exponentially along the $L_i$ end. After a rotation of the $u_1, u_2$ plane, we may assume that the open edge $(p_i,p_{i+1})$ lies on $\{ u_1=0\}$, the polygon lies on $\{u_1\geq 0\}$, and we locally express $(u_1, y_2)$ as functions of $(u_2, y_1)$ satisfying the elliptic system (\ref{eqn:ellipticsystem}). We apply the Harnack inequality to the elliptic equation (\ref{eq elliptic pde for max principle}) satisfied by the positively valued function $u_1$. Notably, the decay of $\partial_{y_1}u_1$, which follows from the asymptotically cylindrical condition along this end, is used here. The positivity of $u_1$ is a consequence of Lemma \ref{lem-proj-to-polygon}.  This shows that in the aymptotically cylindrical region, for $u_2$ in a fixed compact subset of the interval $(p_i, p_{i+1})$, we have the uniform equivalence
\[
C^{-1}u_1(u_2,y_1+1) \leq u_1(u_2,y_1) \leq C u_1(u_2, y_1+1)
\]
whence $|u_1| \geq C' e^{-Cy_1}$ for large $y_1$.

Now we argue for the leading order asymptote of $\nu_i$. 
We identify $NL_i\cong \Lambda^1(L_i)$ by the map $\nu \mapsto \iota_\nu\omega$. There is a tubular neighborhood $\mathcal U$ of $L_i$ and a nonlinear map
$\mathcal F:\mathcal U \subset \Gamma(N L_i)\cong \Omega^1(L_i)\to \Omega^1(L_i) \oplus \Omega^0(L_i)$
defined by \[\mathcal F(\iota_\nu\omega)=\Big(*\big(\exp_{L_i}\nu\big)^*\omega, *\big(\exp_{L_i}\nu\big)^* \operatorname{Im}\Omega\Big),\]
where $*$ denotes the Hodge star operator of $L$.

The elements in $\mathcal F^{-1}(0)$ correspond to special Lagrangians near $L_i$ under the tubular neighborhood map $\exp_{L_i}$. The linearization of $\mathcal F$ at $0$ is given by \cite[Theorem 3.6]{McLean1998}
\[d\mathcal F_{0}:\Omega^1(L_i)\to \Omega^1(L_i) \oplus \Omega^0(L_i), \quad \quad d\mathcal F_{0}=*d\oplus d^*.\] 
For any $\delta<0$, we define
\[\Omega^{j}_\delta(L_i):=\{\sigma\in \Omega^{j}(L_i): \nabla^k \sigma=O(e^{\delta t}),\ \forall k\geq 0, \ \text{as $t\to +\infty$}\}.\]
The restriction map $\mathcal F:\mathcal U\cap \Omega_\delta^1(L_i)\to \Omega_\delta^1(L_i) \oplus \Omega_\delta^0(L_i)$ can be extended to a map \[\tilde{\mathcal F}:\big(\mathcal U\cap \Omega_\delta^1(L_i)\big) \times \Omega^{0}_\delta(L_i) \to \Omega_\delta^1(L_i) \times \Omega_\delta^0(L_i),\]
defined by $\tilde{\mathcal F}(\sigma,f)=\tilde{\mathcal F}(\sigma)+(df,0)$. 
Thus 
$\tilde{\mathcal F}=\mathbb D_{L_i}+\mathcal Q$, 
\[\mathbb D_{L_i}:\Omega^1(L_i)\oplus \Omega^0(L_i)\to \Omega^1(L_i) \oplus \Omega^0(L_i), \quad \quad \mathbb D_{L_i}(\sigma,f)=(*d\sigma+df,d^*\sigma),\]
and the quadratic error term satisfies
\[|\mathcal Q (\sigma)-\mathcal Q (\sigma^\prime)|\lesssim \big(|\sigma-\sigma^\prime|+|\nabla (\sigma-\sigma^\prime)|\big)^2.\]
The operator 
$\mathbb D_{L_i}=\frac d{dt}+ \mathbb D_{\Sigma_i}$, where
$\mathbb D_{\Sigma_i}$ is a first order self-adjoint elliptic operator on $\Omega^1(\Sigma_i)\oplus \Omega^0(\Sigma_i)\oplus \Omega^0(\Sigma_i)$.

The normal vector fields $\nu_i$ have decay rate $\mu_i$ as in the equation (\ref{eq exponential decay}). 
Set $\sigma_i=\iota_{\nu_i}\omega$. For large enough $t$ along the asymptotic cylinder, $\mathcal F(\sigma_i)=0$, so
\[\mathbb D_{L_i}( \sigma_i,0)=O(e^{-2\mu_i t})\quad \text{and} \quad \mathbb D_{L_i}( \sigma_i,0)\in \Omega_{-2\mu_i}^1(L_i)\oplus \Omega_{-2\mu_i}^0(L_i).\]  By the discreteness of spectrum we may assume that $2\mu_i$ is not an eigenvalue of $\mathbb D_{\Sigma_i}$. Then there exist $\beta_i\in \Omega^{1}_{-2\mu_i}(L_i)$ and $f_i\in \Omega_{-2\mu_i}^0(L_i)$ such that $\mathbb D_{L_i}( \sigma_i,0)=\mathbb D_{L_i}( \beta_i,f_i)$ \cite[Section 3.1]{Donaldson2002}. Using the $L^2$-orthogonally decomposition of $(\sigma_i-\beta_i,-f_i)\in \operatorname{ker} \mathbb D_{L_i}$ we obtain that for large enough $t$,
\[ (\sigma_i-\beta_i, -f_i)=\sum_{\mu_i\leq \lambda\in \operatorname{spec} \mathbb D_{\Sigma_i} } e^{-\lambda t} \eta_{\lambda}, \]
where $(\eta_{\lambda},0)$ are $\lambda$-eigensections of $\mathbb D_{\Sigma_i}$. 

Set $\lambda_i:=\min\{\lambda\in \operatorname{spec} \mathbb D_{\Sigma_i}: \mu_i\leq \lambda\}$. If $\lambda_i\geq 2\mu_i$, then $\sigma_i\in \Omega^{1}_{-2\mu_i}(L_i)$, and we can iterate this argument to improve the decay rate $\mu_i$ of $\sigma_i$. But the decay rate of $\nu_i$ is bounded below by a fixed exponential rate, so this process must stop in finitely many steps, so without loss $\lambda_i<2\mu_i$. Thus
\[
\nabla^k ((\sigma_i,0)- e^{-\lambda_i t} \eta_{\lambda_i}) =o(e^{-\lambda_{i} t}),\ \forall k\geq 0 \quad \text{as $t\to +\infty$}.\]
The Lemma follows by comparing the $\Omega^1(L_i)$ component. 
\end{proof}

We proceed to prove the graphicality of $L_{\text{red}}$ near the open edges using the asymptotic behavior of $L$.

\begin{lemma}[Graphicality near the open edges]\label{Graphicality near the open edges} 
There is an open neighborhood $U_i$ of the open edge $(p_i,p_{i+1})$, such that $L_{\text{red}}$ is a smooth graph over $U_i$. 
\end{lemma}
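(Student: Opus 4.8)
The plan is to work near a single end $L_i$ and to show that, after the coordinate rotation used in Lemmas \ref{lem-proj-to-polygon} and \ref{thm_Asymp formula of SL}, the pair $(u_1,u_2)$ serves as a coordinate system on $L_{\text{red}}$ there. Recall that in these coordinates the open edge $(p_i,p_{i+1})$ lies on $\{u_1=0\}$, the polygon lies on $\{u_1\geq 0\}$, the model end is parametrized by $(u_2,y_1)$ with $u_1\equiv 0$ and $y_2$ constant, and $L_{\text{red}}$ is locally the graph of $(u_1,y_2)$ over $(u_2,y_1)$ solving the reduced system (\ref{eqn:ellipticsystem}). Since the model projects to the one-dimensional edge, graphicality over a two-dimensional region is precisely the statement that $L_{\text{red}}$ spreads transversally into the polygon; concretely, it suffices to prove that $\partial u_1/\partial y_1$ is nonzero for $y_1$ large, for then $y_1\mapsto u_1$ is invertible and one recovers $y_1=y_1(u_1,u_2)$ and $y_2=y_2(u_1,u_2)$ as smooth functions.

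This transversal spreading is controlled by the leading term of the asymptotic formula in Lemma \ref{thm_Asymp formula of SL}. Writing the $U(1)$-invariant normal field as the reduced perturbation $(u_1,\,y_2-\text{const})$ and identifying the cylinder parameter $t$ with $y_1$, that lemma gives
\[ u_1 = e^{-\lambda_i y_1}\,\phi_i(u_2)\,(1+o(1)), \qquad \partial_{y_1}u_1 = -\lambda_i\,e^{-\lambda_i y_1}\,\phi_i(u_2)\,(1+o(1)), \]
uniformly for $u_2$ in compact subsets of $(p_i,p_{i+1})$, where $\phi_i$ is the $u_1$-component of the translation-invariant eigensection $\xi_i$. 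Linearizing the reduced system (\ref{eqn:ellipticsystem}) about the model solution $u_1\equiv 0$ and separating variables with $u_1=e^{-\lambda y_1}\alpha(u_2)$ and $y_2-\text{const}=e^{-\lambda y_1}\beta(u_2)$, the two equations become $\beta'=-\lambda V_0\alpha$ and $\alpha'=\lambda\beta$, where $V_0(u_2):=V(0,u_2)$ is the restriction of $V$ to the edge. Eliminating $\beta$ yields the Sturm-Liouville problem
\[ -\phi_i'' = \lambda_i^2\, V_0\, \phi_i \quad \text{on } (p_i,p_{i+1}), \]
with $V_0>0$ a smooth weight on the open edge blowing up at the two vertices.

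The crux is to show $\phi_i>0$ on the open edge. First, by Lemma \ref{lem-proj-to-polygon} and the strong maximum principle applied to (\ref{eq elliptic pde for max principle}), one has $u_1>0$ on $L$; letting $y_1\to\infty$ in the displayed asymptotics forces $\phi_i\geq 0$. Second, $\phi_i\not\equiv 0$: if it vanished identically then $\beta=\alpha'/\lambda_i\equiv 0$ as well, so the entire leading eigensection $\xi_i$ would vanish, contradicting its non-triviality in Lemma \ref{thm_Asymp formula of SL}. Thus $\phi_i$ is a nonnegative, nontrivial eigenfunction of a Sturm-Liouville operator, and by Sturm oscillation theory (any higher eigenfunction must change sign in the interior) it is necessarily the ground state, hence strictly positive on $(p_i,p_{i+1})$. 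Note that $\phi_i$ vanishes at the two vertices, which is exactly why the conclusion is confined to the \emph{open} edge.

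With $\phi_i>0$, the second displayed asymptotic gives $\partial_{y_1}u_1<0$ for $y_1$ large, uniformly on compact subintervals of the open edge; hence $y_1\mapsto u_1(u_2,y_1)$ is strictly monotone and invertible there. This produces smooth functions $y_1=y_1(u_1,u_2)$ and $y_2=y_2(u_1,u_2)$, exhibiting $L_{\text{red}}$ as a smooth graph over a one-sided open neighborhood $U_i$ of the open edge $(p_i,p_{i+1})$ lying in the polygon; taking an exhaustion of the open edge by compact subintervals and letting the $u_1$-width shrink toward the vertices (where $\phi_i\to 0$) assembles $U_i$. I expect the main obstacle to be the identification of the leading profile $\phi_i$ as the positive ground state: this is where the maximum-principle positivity of $u_1$, the Sturm-Liouville reduction of the linearized special Lagrangian equation, and oscillation theory must be combined, and where the degeneration of the estimate near the vertices has to be absorbed into the choice of $U_i$.
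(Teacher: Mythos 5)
Your treatment of the end region follows essentially the same route as the paper: you derive the ODE $\phi_i''=-\lambda_i^2 V_0\,\phi_i$ for the leading profile (the paper's equation (\ref{eq a eigen function})), you get $\phi_i\geq 0$ from Lemma \ref{lem-proj-to-polygon}, nontriviality from the relation $\phi_i'=\lambda_i\beta$ (so $\phi_i\equiv 0$ would kill the whole eigensection $\xi_i$), and then strict positivity gives $\partial u_1/\partial y_1<0$ and the implicit function theorem applies, uniformly on compact subintervals of the open edge. One caution on your positivity step: invoking Sturm oscillation/ground-state theory is both heavier than needed and not justified as stated, since the weight $V_0$ blows up at the two vertices and you have neither shown that $\phi_i$ lies in a function space where the singular Sturm--Liouville problem has discrete spectrum nor proved your claim that $\phi_i$ vanishes at the vertices. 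The conclusion you want is elementary: if $\phi_i(c)=0$ at an interior point, then nonnegativity forces $\phi_i'(c)=0$, and uniqueness for the linear second-order ODE gives $\phi_i\equiv 0$, a contradiction. This is exactly the strong maximum principle argument the paper uses.

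The genuine gap is at the very end. Monotonicity of $u_1$ in $y_1$ only shows that the portion of $L_{\text{red}}$ lying in the $L_i$ end region is a smooth graph over a one-sided neighborhood $U_i$ of the open edge. The lemma as stated --- and as used later, e.g.\ in Corollary \ref{cor_deg 1} (degree one) and in Cases 1--3 of Lemma \ref{Lemma_submersion} --- requires that \emph{all} of $\pi_u^{-1}(U_i)\cap L_{\text{red}}$ is this single graph, i.e.\ that no other sheet of $L_{\text{red}}$, coming from the compact core or from the other ends, projects into $U_i$. Your proposal never rules this out, and without it the fiber of $\pi_u$ over a point of $U_i$ could a priori contain several points. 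The paper closes this with a short exclusion argument: points in the other ends project near the other edges, hence away from any compact subinterval $J\subset(p_i,p_{i+1})$; and if points of the compact core of $L_{\text{red}}$ projected into arbitrarily thin one-sided neighborhoods of $J$, a limit point would project onto the open edge itself, contradicting Lemma \ref{lem-proj-to-polygon}. This is where the properness statement (Lemma \ref{proper}) and the asymptotics of the remaining ends enter. Adding this step would complete your proof.
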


\begin{proof} 
We examine the end modeled on $L_i$. As before, after rotation of the $u_1, u_2$ plane, we may assume that the open edge $(p_i,p_{i+1})$ lies on $\{ u_1=0\}$, the polygon lies on $\{u_1\geq 0\}$, and we express $(u_1, y_2)$ as functions of $(u_2, y_1)$ satisfying the elliptic system (\ref{eqn:ellipticsystem}). Recall that the asymptotic cylinder $L_i$ projects to $[p_i,p_{i+1}]\times \{y=(y_1,y_2)\in \mathbb R^2 \; | \; (p_{i+1}-p_i) \cdot y=c_i\}$. After translation by a constant in $(y_1,y_2)$, we may assume $c_i=0$.

The asymptotic formula  (\ref{eq asymp formula 1 of SL}) translates to the following estimate upon $U(1)$ dimensional reduction: for each $k\geq 0$,
\begin{equation*} \begin{cases}
\nabla^k\big(u_1-e^{-\lambda y_1}a(u_2)\big)=o(e^{-\lambda y_1}) \quad \text{as $y_1\to +\infty$},
\\
\nabla^k\big(y_2-e^{-\lambda y_1}b(u_2)\big)=o(e^{-\lambda y_1}) \quad \text{as $y_1\to +\infty$},
\end{cases}
\end{equation*}
 for some constant $\lambda>0$ and functions $a,b:(p_i,p_{i+1})\to \mathbb R$ that are not both identically zero. The above estimates are uniform for $u_2$ in any compact subset of $ (p_i,p_{i+1})$.
In particular,
\begin{equation*} 
 \frac{\partial u_1}{\partial u_2}= e^{-\lambda y_1}a^\prime(u_2)+o(e^{-\lambda y_1})=-\frac{\partial y_2}{\partial y_1}=\lambda e^{-\lambda y_1}b(u_2)+o(e^{-\lambda y_1})\quad \text{as $y_1\to +\infty$}.
\end{equation*}
Comparing the leading order terms, we obtain  $a^\prime(u_2)=\lambda b(u_2)$. Thus, $a$ can not be identically zero. Since $u_1\geq 0$, the function $a$ is non-negative.

The function $u_1$ satisfies the PDE (\ref{eq elliptic pde for max principle}), which can be rewritten using the the elliptic system (\ref{eqn:ellipticsystem}) as 
 \begin{equation}
 \label{eq rephrased elliptic pde for max principle}
         \frac{\partial^2 u_1}{\partial y_1^2} +\frac 1V \frac{\partial^2 u_1}{\partial u_2^2} + \frac 1 {V^3}\frac{\partial V}{\partial u_1 } \Big(\frac{\partial y_2}{\partial u_2}\Big)^2 = 0.
    \end{equation}
    Since $\tfrac{1}{V^3}\tfrac{\partial V}{\partial u_1 }=O(1)$ as $y_1\to +\infty$, substituting the above exponential asymptote of $u_1$ and $y_2$ into the PDE (\ref{eq rephrased elliptic pde for max principle}), we deduce from the $y_1\to +\infty$ leading order asymptote that
\begin{equation}\label{eq a eigen function}
      \lambda^2 a +\frac 1{V(0,u_2)} \frac{d^2 a}{du_2^2}=0,\quad u_2\in (p_i,p_{i+1}).
    \end{equation}
The strong maximum principle then implies that $a>0$ for all $u_2\in (p_i,p_{i+1})$.

In particular, given any compact subset of the open edge $(p_i,p_{i+1})$ in the polygon, then for $y_1$ large enough, we have
\begin{equation*} 
\frac{\partial u_1}{\partial y_1}=-\lambda e^{-\lambda y_1}a(u_2)+o(e^{-\lambda y_1}) <0.
\end{equation*}
By the implicit function theorem, the projection map from this subset of $L_{\text{red}}$ in the $L_i$ end region to the $(u_1,u_2)$ coordinates is then a diffeomorphism onto the image, which covers a small neighborhood of the given compact subset of $(p_i,p_{i+1})$.

On the other hand, by the asymptote along the other ends and the properness of the projection to $(u_1,u_2)$-coordinates, we deduce that only points in $L_i$ can project to  small enough neighborhoods of the compact subsets of $(p_i,p_{i+1})$. The Lemma follows. 
\end{proof}

By the properness of $\pi_u$, this implies that

\begin{corollary}\label{cor_deg 1}   The projection map has degree one on the interior region of the polygon.
\end{corollary}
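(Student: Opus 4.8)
The plan is to realize the restriction of $\pi_u$ to the reduced special Lagrangian as a proper map between oriented surfaces and to compute its degree at a regular value located just inside one of the open edges, where the preceding lemmas already give complete control.

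First I would set up the degree. Write $\Delta^\circ$ for the interior of the polygon. By Lemma \ref{lem-proj-to-polygon} the image $\pi_u(L_{\text{red}})$ lies in the closed polygon, and since $\partial L_{\text{red}}=L_{\text{fix}}/U(1)$ projects to the vertices, the preimage $\pi_u^{-1}(\Delta^\circ)$ is an open $2$-manifold without boundary. By Lemma \ref{proper} the restricted map $\pi_u\colon \pi_u^{-1}(\Delta^\circ)\to \Delta^\circ$ is proper, and $\Delta^\circ$ is connected, so its mapping degree is a well-defined integer, equal to the signed count of preimages over any regular value. I would orient $L_{\text{red}}$ by $\beta:=\iota_{\partial_\theta}\operatorname{Re}\Omega=du_1\wedge dy_2-du_2\wedge dy_1$, the $U(1)$-reduction of the calibration, and orient the target $\mathbb{R}^2_{(u_1,u_2)}$ by $du_2\wedge du_1$.

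Next I would exhibit a regular value with exactly one preimage. Fixing $i$ and rotating so that $(p_i,p_{i+1})\subset\{u_1=0\}$ and $\Delta^\circ\subset\{u_1>0\}$, I parametrize the $L_i$-end of $L_{\text{red}}$ by $(u_2,y_1)$. By Lemma \ref{Graphicality near the open edges} the reduced surface is a smooth graph over a neighborhood of the open edge, and the closing remark of that proof shows that points close enough to a compact subset of $(p_i,p_{i+1})$ are hit only from the $L_i$-end. Hence any $q\in\Delta^\circ$ sufficiently near this edge has a single preimage, at which the Jacobian of $\pi_u$ equals $-\partial u_1/\partial y_1\neq 0$; thus $q$ is a regular value and the degree is $\pm 1$.

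Finally I would fix the sign. The asymptotics of Lemma \ref{Graphicality near the open edges} give $u_1\sim e^{-\lambda y_1}a(u_2)$ with $a>0$, so $\partial u_1/\partial y_1<0$ along the end. Substituting the reduced equations (\ref{eqn:ellipticsystem}) gives $\beta=-\big[(\partial u_1/\partial u_2)^2+V(\partial u_1/\partial y_1)^2+1\big]\,du_2\wedge dy_1$, while $\pi_u^*(du_2\wedge du_1)=(\partial u_1/\partial y_1)\,du_2\wedge dy_1$; since both coefficients are negative, the two orientations agree at $q$, so $\pi_u$ is orientation preserving there and the degree is $+1$. I expect the only delicate point to be the orientation bookkeeping — keeping the reduction of $\operatorname{Re}\Omega$, the sign of $\partial u_1/\partial y_1$, and the chosen orientation of the $(u_1,u_2)$-plane mutually consistent — since properness and the single-preimage property are already delivered by Lemmas \ref{proper} and \ref{Graphicality near the open edges}.
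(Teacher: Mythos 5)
Your proposal is correct and is essentially the argument the paper intends: Corollary \ref{cor_deg 1} is stated as an immediate consequence of properness (Lemma \ref{proper}), which makes the degree well-defined over the connected interior, together with graphicality near an open edge (Lemma \ref{Graphicality near the open edges}), which supplies a regular value with a single preimage. Your additional orientation bookkeeping via $\iota_{\partial_\theta}\operatorname{Re}\Omega$ and the sign of $\partial u_1/\partial y_1$ is accurate and simply makes explicit the sign convention the paper leaves implicit.
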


From now on, we assume that $L$ is homeomorphic to $S^3$ minus $n$ points. {As shown in Step 2 of Lemma \ref{Lemma_manifold with boundary}, this implies that $L_{\text{red}}$ is simply connected.}

\begin{lemma}
\label{Lemma_submersion} Suppose $L$ is homeomorphic to an $n$-holed 3-sphere. Then, the projection $\pi_u$ is a local diffeomorphism in the interior of $L_{\text{red}}$.
\end{lemma}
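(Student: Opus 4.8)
The plan is to show that the Jacobian $J_u$ of $\pi_u|_{L_{\text{red}}}$, defined by $\pi_u^*(du_1\wedge du_2)=J_u\,\mathrm{vol}_{L_{\text{red}}}$, is nowhere zero on the interior of $L_{\text{red}}$; this is equivalent to $d\pi_u$ being an isomorphism there. First I record the available inputs: by Lemma \ref{Graphicality near the open edges} the projection is a diffeomorphism on a collar of each end, so $J_u\neq 0$ there; by Lemma \ref{proper} the critical set $C:=\{J_u=0\}$ is therefore compact and contained in the interior; and $L_{\text{red}}$ is real-analytic (being the quotient of the real-analytic minimal submanifold $L$) and, by Lemma \ref{Lemma_manifold with boundary}, simply connected.

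Next I set up local coordinates adapted to a putative critical point $p_0\in C$. Since the reduced volume form $\sigma=du_1\wedge dy_2-du_2\wedge dy_1$ never vanishes, after an $SO(2)$-rotation of the $(u_1,u_2)$-plane (accompanied by the matching rotation of $(y_1,y_2)$, under which \eqref{SLeq} is invariant) I may use $(x,s):=(u_1,y_2)$ as coordinates near $p_0$ and write $u_2=u_2(x,s)$, $y_1=y_1(x,s)$. In this chart the reduced equations \eqref{SLeq} become $y_{1,s}=-u_{2,x}$ and $y_{1,x}=V u_{2,s}$, from which a direct computation gives $\sigma=(1+u_{2,x}^2+V u_{2,s}^2)\,dx\wedge ds$ and $J_u=u_{2,s}/(1+u_{2,x}^2+V u_{2,s}^2)$; thus the sign of $J_u$ is that of $w:=u_{2,s}$, and $C=\{w=0\}$. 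Cross-differentiation produces the quasilinear elliptic equation $u_{2,xx}+V u_{2,ss}+V_{u_2}u_{2,s}^2=0$, and differentiating once more in $s$ shows that $w$ solves a second-order linear elliptic equation $w_{xx}+Vw_{ss}+b\,w_s+c\,w=0$ whose coefficients are bounded in terms of the solution.

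The endgame is to prove that $w$ (equivalently $J_u$) does not change sign, namely that $J_u\ge 0$ throughout the interior. Near the ends $J_u>0$, since there $\pi_u$ is an orientation-preserving diffeomorphism realizing the degree-one count of Corollary \ref{cor_deg 1}. By the Hartman--Wintner nodal analysis for the elliptic equation above, $C$ is a locally finite union of real-analytic arcs across which $J_u$ changes sign, dividing $L_{\text{red}}$ into an orientation-preserving region $\{J_u>0\}$ and an orientation-reversing (``folded'') region $\{J_u<0\}$. To exclude the latter I will use the degree-one property and properness of $\pi_u$ together with the simple-connectivity of $L_{\text{red}}$: an orientation-reversing sheet forces every nearby regular value to acquire additional orientation-preserving preimages in order to maintain net degree one, and tracking this count against the single-sheeted behaviour near the ends, with no nontrivial loops in $L_{\text{red}}$ available to accommodate the extra sheets, yields a contradiction. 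Once $J_u\ge 0$ is established, the strong maximum principle applied to the linear elliptic equation for $w$ shows that if $w$ vanishes at an interior point it vanishes identically, contradicting $w>0$ near the ends; hence $C=\varnothing$ and $\pi_u$ is a local diffeomorphism on the interior of $L_{\text{red}}$.

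The main obstacle is exactly the sign claim $J_u\ge 0$, i.e.\ ruling out folded regions. The difficulty is that the reduced special Lagrangian condition is symmetric under reversal of orientation: the local computation above shows that individual tangent planes can have $u_{2,s}$ of either sign, so positivity is not a pointwise fact but a global consequence of the boundary behaviour, the degree, and the topology. A useful a priori constraint is that $\kappa:=V\,du_1\wedge du_2$ is a closed form of comass one on $Z_{\text{red}}$, giving the bound $|J_u|\le 1/V$ on $L_{\text{red}}$; combined with the compactness of $C$ and the elliptic equation for $w$, this is the input I expect to need in order to upgrade the topological count into strict positivity of $J_u$.
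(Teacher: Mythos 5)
Your reduction is set up correctly: the choice of coordinates $(x,s)=(u_1,y_2)$ after rotation, the system $y_{1,s}=-u_{2,x}$, $y_{1,x}=Vu_{2,s}$, the identification of the sign of $J_u$ with that of $w=u_{2,s}$, and the closing step ``if $J_u\ge 0$ everywhere then Harnack/strong maximum principle for $w$ forces $J_u>0$'' are all sound; your Hartman--Wintner nodal picture also matches what the paper establishes (via real analyticity and the Cauchy--Riemann structure of the leading-order system) in Step 1 of its proof. But there is a genuine gap exactly where you flag it: the claim $J_u\ge 0$ \emph{is} the content of the lemma, and the sketch you offer for it --- degree one plus properness plus simple connectivity, ``tracking the count of sheets'' --- cannot be completed as stated. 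Smooth proper degree-one maps from a simply connected surface onto a planar domain with fold curves do exist (Whitney pleats: the identity outside a small disc, a pleat inside it), so no purely topological count rules out the region $\{J_u<0\}$: a compact pleated island adds pairs of preimages of opposite sign over a small set, is invisible to the degree, and requires no nontrivial loops in $L_{\text{red}}$. The exclusion of folds must use the elliptic structure \emph{globally}, not merely the local structure of $C$, and your proposal never identifies that mechanism.

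For comparison, the paper closes this gap by studying not the critical set $C$ but the level set $f^{-1}(0)$ of the affine function $f=\langle\pi_u-\pi_u(z_0),v\rangle$ with $v\perp\operatorname{im}\,d\pi_u|_{z_0}$: the CR structure of the linearization forces $f^{-1}(0)$ to have at least four branches at the critical point $z_0$; the strong maximum principle for equation (\ref{eq elliptic pde for max principle}) together with simple connectivity shows $f^{-1}(0)$ contains no closed loop; Lemma \ref{Graphicality near the open edges} shows each end can absorb at most one branch; and the cases where the line $\pi_u(z_0)+v^{\perp}$ passes through one or two vertices are handled by tracking branches that meet boundary components of $L_{\text{red}}$, again via the maximum principle (including an unbounded-region version). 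Note also a secondary error in your setup: compactness of $C$ does not follow from Lemma \ref{proper}. Points of $C$ close to $\partial L_{\text{red}}$ project near the vertices $p_i$, which lie in no compact subset of the open polygon; in fact $du_1$ and $du_2$ vanish at the $U(1)$-fixed points, so $J_u\to 0$ along $\partial L_{\text{red}}$ and the critical set could a priori accumulate there. This boundary behavior is precisely what the paper's Cases 2 and 3 are designed to control, and your argument as written has no counterpart for it.
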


\begin{proof} 
Suppose there is a point $z_0$ in the interior of $L_{\text{red}}$ where ${du_1\wedge du_2}=0$ that is, $d\pi_u$ is not surjective at $z_0$. Set $u_0:=\pi_u(z_0)$, a point in the interior of the polygon. Let $v$ be a vector in the $(u_1,u_2)$-plane such that image of $d\pi_u$ at $z_0$ is orthogonal to $v$. We define an affine linear function $f:L_{\text{red}}\to \mathbb R$ by 
$$f(z)=\langle\pi_u(z)-u_0,v\rangle .$$
Here $\langle\cdot, \cdot \rangle$ is the standard Euclidean inner product on $\mathbb R_u^2$. Note that $z_0\in f^{-1}(0)=\pi_u^{-1}(u_0+\ell)$, where $\ell:=v^\perp\subset \mathbb R^2_u$ and ${df}_{|z_0}=0$. We want to get a contradiction by studying the set $f^{-1}(0)$.

\textbf{Step 1} ($f^{-1}(0)$ near a singular interior point). By the implicit function theorem, the zero locus of $f$ is locally a smooth curve at any interior point of $L_{\text{red}}$ where $df
\neq 0$. Let $w$ be an interior point of $L_{\text{red}}$ lying in $f^{-1}(0)$ with ${df}= 0$ at $w$. After an affine transformation, we may assume that $f=u_1$. We claim that $u_1^{-1}(0)$ near $w$ is a union of at least four Jordan arcs emanating from $w$ and are disjoint after removing $w$. In particular, this holds at $w=z_0$. As in the proof of the Lemma \ref{lem-proj-to-polygon}, near $w$, we can regard $u_1$ and $y_2$ as functions of $y_1$ and $u_2$, satisfying the elliptic system (\ref{eqn:ellipticsystem}).
    Evidently, $u_1$ and $y_2$ are real analytic functions of $y_1$ and $u_2$ for being solutions of an elliptic PDE with real analytic coefficients. Set $w:=(0,a,b,c)\in \mathbb R^2_u\times \mathbb R^2_y$, so $(y_1,u_2)=(b,a)$ is a zero of multiplicity $m\geq 1$ of $(u_1,y_2-c)$. We consider the truncated Taylor polynomials
\[p(y_1,u_2):=\sum_{j=0}^m\frac{(y_1-b)^j (u_2-a)^{m-j}}{j! (m-j)!} \frac{\partial^m u_1}{\partial y^j_1\partial u^{m-j}_2}(b,a)\]
and 
\[q(y_1,u_2):=\sum_{j=0}^m\frac{(y_1-b)^j (u_2-a)^{m-j}}{j! (m-j)!}  \frac{\partial^m y_2}{\partial y^j_1\partial u^{m-j}_2}(b,a).\]
Then $u_1$ and $y_2$ can be expressed near $(b,a)$ as
\[u_1=p(y_1,u_2)+ O\big(\lvert y_1-b \rvert^{m+1}+\lvert u_2-a \rvert^{m+1}\big)\]
and
\[y_2=c+q(y_1,u_2)+ O\big(\lvert y_1-b \rvert^{m+1}+\lvert u_2-a \rvert^{m+1}\big).\]
  Since $V(0,a)\neq 0$ and $V(u_1,u_2)=V(0,a)+O(\lvert u_1 \rvert+\lvert u_2-a \rvert)$. Comparing the $m$-th order terms in the equation (\ref{eqn:ellipticsystem}) near $(b,a)$, we obtain that
\begin{align*}
        V(0,a) \frac{\partial p}{\partial y_1}  = \frac{\partial q}{\partial u_2}, 
        \quad \quad \text{and}  \quad \quad
        \frac{\partial p}{\partial u_2}  = -\frac{\partial q}{\partial y_1}.
    \end{align*}
This is the Cauchy-Riemann equation for $\sqrt{V(0,a)} p+iq$ as a function of $y_1+i \sqrt{V(0,a)}{u_2}$. Thus
\[\sqrt{V(0,a)} u_1+i(y_2-c)=c'\Big(y_1-b+i \sqrt{V(0,a)}{(u_2-a)}\Big)^m+O\big(\lvert y_1-b \rvert^{m+1}+\lvert u_2-a \rvert^{m+1}\big) \]
for some $ c'\in \mathbb C^*$. Since ${du_1}=0$ at $(b,a)$, hence $m\geq 2$. This proves our claim.

\textbf{Step 2} ($f^{-1}(0)$ does not contain any closed loop). This step uses the assumption that $L_{\text{red}}$ is simply connected. Indeed, if $f^{-1}(0)$ contains a closed loop, then simply connectedness implies that this loop bounds a disc in $L_{\text{red}}$. Hence, $f$ admits a minimum or maximum inside this disc, say at $p$, which is an interior point of $L_{\text{red}}$. But in the proof of the Lemma \ref{lem-proj-to-polygon}, we have seen that $f$ near $p$ satisfies the elliptic PDE (\ref{eq elliptic pde for max principle}), and the strong maximum principle yield that $f=0$ over the disc, which is a contradiction.

\textbf{Step 3}. Using the structure of $f^{-1}(0)$, we will now derive a contradiction.  Since $df=0$ at $z_0$, there are at least four paths in $f^{-1}(0)$. If along these paths,
we encounter an interior point of $L_{\text{red}}$ where $df=0$, then we choose a branch of $f^{-1}(0)$ to continue the path. These continued paths do not meet, as $f^{-1}(0)$ does not contain any closed loop. 

\textbf{Case 1.} Suppose the line $u_0+\ell$ does not pass through any of the vertices of the polygon, then all of the above paths can continue to infinity only along the two asymptotic ends corresponding to the two edges intersecting $u_0+l$. Lemma \ref{Graphicality near the open edges} implies that each such end can contain at most one path projecting to $u_0+l$.  Since there are at least four paths, this leads to a contradiction. 

\textbf{Case 2.} Suppose $u_0+\ell$ passes through one vertex $p_i$ and an open edge. In this case, again Lemma \ref{Graphicality near the open edges} implies that at most one of the above paths can continue to infinity along the cylindrical end corresponding to the edge. Since $u_0+\ell$ passes through $p_i$, we have that the boundary component $\pi_u^{-1}(p_i)\cap L_{\text{red}}\subset f^{-1}(0)$. The remaining paths (at least three) can have the following possibilities:

(i) At least two of them are meeting the boundary component $\pi_u^{-1}(p_i)\cap L_{\text{red}}$. This would imply that $f^{-1}(0)$ contains a closed loop formed by these two paths together with a portion of $L_{\text{fix}}\cap \pi_u^{-1}(p_i)$. This loop bounds a disc, and $f$ is constant on the boundary loop, so we again apply the strong maximum principle of Lemma \ref{lem-proj-to-polygon} to deduce a contradiction.

(ii) At least two of them are meeting the boundary component $\pi_u^{-1}(p_i)\cap L_{\text{red}}$ at infinity. In this case, we define a region $\mathcal R$ as follows. If these two paths go to infinity along one end then $\mathcal R$ is the region in between them, otherwise it is the region in between them and the above boundary component. Although the region $\mathcal R$ is unbounded, the fact that $f=0$ along its boundary and the limit of $f$ is zero at the infinity of $\mathcal{R}$, means that we can still run the strong maximum principle argument to deduce $f=0$ on $\mathcal{R}$, contradiction.

\textbf{Case 3.} Suppose $u_0+\ell$ passes through two distinct vertices $p_i$ and $p_j$.  Then again the Lemma (\ref{Graphicality near the open edges}) implies that at least four such paths  should meet the boundary components $\pi_u^{-1}(p_i)\cap L_{\text{red}}$ or $\pi_u^{-1}(p_j)\cap L_{\text{red}}$. At least two of them must meet one of the boundary components at some points or at infinity, and the same argument as Case 2 reaches a contradiction.
\end{proof}

We can now finish the proof of the main theorem, using the graphicality of $L_{\text{red}}$ over the interior of the polygon.

\begin{proof}[Proof of Theorem \ref{uniqueness}] 
The projection of $L_{free}/U(1)\subset L_{\text{red}}$ to the $(u_1,u_2)$-coordinates is a degree one proper covering map over the interior of the convex polygon, hence $L_{free}/U(1)$ is a \emph{smooth graph} over the interior of the polygon.

The condition $du_1 \wedge dy_1 + du_2 \wedge dy_2 = 0$ implies that $(y_1,y_2) = \nabla \varphi(u_1,u_2)$ for some function $\varphi$ on the interior of the convex polygon. The condition $V du_1 \wedge du_2 = dy_1 \wedge dy_2$ is equivalent to $\det D^2 \varphi = V$. Since the volume forms on $L$ is given by
    \begin{align*}
        Re(\Omega) = \theta \wedge (du_1 \wedge dy_2 - du_2 \wedge dy_1) > 0,
    \end{align*}
we deduce $tr D^2 \varphi = \iota_{\theta} Re(\Omega)(\partial u_1, \partial u_2) > 0$, so $\varphi$ must be a convex function.

We shall examine the boundary condition on $\varphi$. We claim that on any edge $[p_i,p_{i+1}]$ of the polygon, $\varphi$ extends continuously to an \emph{affine linear} function. As usual, without loss the open edge is contained in $\{ u_1=0\}$, and we can express $(u_1, y_2)$ as a function of $(u_2, y_1)$. This corresponds to taking the partial Legendre transform of the convex function $\varphi$:
\[
\phi= \varphi- u_1 y_1, \quad u_1=- \frac{\partial \phi}{\partial y_1},\quad y_2= \frac{\partial \phi}{\partial u_2}.
\]
By the asymptotically cylindrical condition along the end $L_i$,
\[
u_1= O(e^{-\mu_i y_1}), \quad (p_{i+1}-p_i)y_2 = c_i +O( e^{-\mu_i y_1}),\quad y_1\to +\infty. 
\]
The exponential decay of the special Lagrangian implies that its geometric distance to the cylindrical model decays exponentially. Consequently, the functions $u_1$ and $(p_{i+1}-p_i)y_2 - c_i$ also decay exponentially, and therefore, along each end $L_i$, the uniform convergence holds above the whole closed interval $[p_i, p_{i+1}]$.
Upon integration, as $y_1\to +\infty$, $\phi$ and hence $\varphi$ converge to some affine linear function in $u_2$, with convergence rate $O(e^{-\mu_i y_1})$. This argument shows that along the end $L_i$, $\varphi$ extends continuously to an affine linear function on $[p_i, p_{i+1}]$, which we may write as
\begin{equation}
\varphi( s p_i+ (1-s) p_{i+1}) = s b_i+ (1-s) b_{i+1},
\end{equation}
for real numbers $b_i, b_{i+1}$, such that $b_{i+1}-b_i=c_i$. The caveat here is that each vertex $p_i$ is shared by two edges $[p_i,p_{i+1}]$ and $[p_{i-1}, p_i]$, and we need to show that the two values assigned to $\varphi(p_i)$ are equal.

We view $\varphi$ as a function on the special Lagrangian $L\setminus L_{\text{fix}}$ satisfying $d\varphi= y_1 du_1+ y_2 du_2$, and extending continuously to a function on $L$. Recall from Step 2 of Lemma \ref{Lemma_manifold with boundary} that $L_{\text{fix}}\cap \pi_u^{-1}(p_i)$ is homeomorphic to $\mathbb{R}$, with two ends going off to infinity along the $L_i$ and $L_{i-1}$ ends respectively. Since $u_1$ and $u_2$ are moment maps for the $U(1)$-action on $X$, $du_1=du_2=0$ at the $U(1)$-fixed points. Therefore the function $\varphi$ must be locally constant on $L_{\text{fix}}$, so $\varphi$ is constant on the unique component of $L_{\text{fix}}$ lying above $p_i$, and in particular both ends $L_i$ and $L_{i-1}$ assign the same limiting value to $\varphi(p_i)$. Thus $\varphi$ 
extends continuously to the vertices $p_i$ of the polygon.

In conclusion, $\varphi$ is a continuous convex function on the polygon, solving the real Monge-Amp\`ere equation $\det(D^2\varphi)=V$ in the interior, and satisfies the affine linear boundary condition on the edges. This Dirichlet problem admits a unique solution, for instance as in \cite[Proposition 3.3]{figalli2018monge} and \cite[Theorem 6.3]{harvey2009dirichlet}, and the gradient graph of this solution agrees precisely with the special Lagrangian construction in \cite[Theorem 3]{esfahani2024donaldson}, so $L$ belongs to the family of special Lagrangians constructed there.
\end{proof}

\appendix

\section{Appendix: topology}\label{topology}

In this appendix, we prove that any special Lagrangian \( L \subset X \times \mathbb{C} \) satisfying the asymptotic condition of the generalized local Donaldson-Scaduto conjecture is homeomorphic to either an \( n \)-holed 3-sphere or an \( n \)-holed connected sum of \( S^1 \times S^2 \). 

\begin{theorem}
\label{Thm_PossibleTopology}
    Let $L \subset X \times \mathbb{R}^2$ be a special Lagrangian submanifold with $n$ asymptotically cylindrical ends modeled on the product of $\Sigma_i$ and $\{y \in \mathbb{R}^2 \; | \; (p_{i+1} - p_i) \cdot y = c_i \}$. Then {$L$ is $U(1)$-invariant and $L/U(1)$ is a surface with boundary, and}
    \begin{align*}
        L \cong S^3 \setminus \{ \text{$n$ points} \} \quad \text{ or } \quad L \cong \#_{2g+b} (S^2 \times S^1) \setminus \{ \text{$n$ points} \}, \;
    \end{align*}
where $g$ is the genus and $b$ is the number of circle boundary components of the surface $L/U(1)$.
\end{theorem}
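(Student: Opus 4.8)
The statement has two parts already in hand: $L$ is $U(1)$-invariant by Lemma \ref{U(1)-invariant}, and $L_{\text{red}} = L/U(1)$ is a smooth surface with boundary by Lemma \ref{Lemma_manifold with boundary}, which is orientable since $L$ and the $U(1)$-orbits are oriented. The remaining content is the topological classification of $L$ from the orbit surface, and the plan is to reduce it to the classification of semifree circle actions on closed $3$-manifolds. The first step is to compactify. Each asymptotic end of $L$ is diffeomorphic to $\Sigma_i \times \mathbb{R}^+ \cong S^2 \times \mathbb{R}^+$, so capping each of the $n$ ends with a single point produces a closed oriented smooth $3$-manifold $\overline L$ with $L \cong \overline L \setminus \{\infty_1,\dots,\infty_n\}$; a neighborhood of $\infty_i$ is the open cone on $S^2$, hence a ball, so $\overline L$ is genuinely a manifold. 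Near the end $L_i$ the $U(1)$-action is rotation of the $S^2$-cross-section fixing the two poles over $p_i$ and $p_{i+1}$, and under coning this extends to rotation of the ball about its polar axis; thus the action extends over $\overline L$ with each $\infty_i$ a fixed point lying on the axis that joins the two fixed arcs over $p_i$ and $p_{i+1}$.

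The second step is to identify the orbit space and reconstruct $\overline L$. I would show $\overline L / U(1)$ is exactly the compactification $\overline{L_{\text{red}}}$ of $L_{\text{red}}$ obtained by adding one arc at infinity to each end: both are modeled near $\infty_i$ on the orbit space of the rotated ball, a half-disc whose diameter is the image of the fixed axis. By Lemma \ref{Lemma_manifold with boundary}, $L_{\text{red}}$ has $n$ line boundary components (the fixed arcs over the vertices $p_i$, each joining the end $L_{i-1}$ to $L_i$), together with $g$ handles and $b$ circle boundary components. Tracing the cyclic adjacency, the $n$ fixed arcs and the $n$ arcs added at infinity assemble through the points $\infty_i$ into a single boundary circle, while the $b$ circle boundaries survive; hence $\overline\Sigma := \overline L / U(1)$ is a compact orientable surface of genus $g$ with $k := 1 + b$ boundary circles, and the points $\infty_i$ lie on the distinguished boundary circle. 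Since the action is semifree with fixed set mapping onto $\partial\overline\Sigma$, and the circle bundle over the interior is trivial (as $\overline\Sigma$ retracts to a $1$-complex), $\overline L$ is recovered as $\overline\Sigma \times S^1$ with a solid torus glued over each of the $k$ boundary circles so that the $S^1$-fibre bounds the meridian disc.

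The final step is the classification of this semifree $S^1$-manifold. I would argue by induction on the complexity of $\overline\Sigma$ via two elementary moves. The base case $\overline\Sigma = D^2$ (so $g = b = 0$) gives the genus-one Heegaard splitting of $S^3$, since gluing the meridian of the new solid torus to the longitude of $D^2 \times S^1$ yields $S^3$. Passing from the genus-$g$ surface with $k$ boundary circles to the one with $k+1$ boundary circles deletes an interior fibre solid torus and reglues it along the fibre framing; this is $0$-surgery on an unknotted regular fibre, which I would check is connected sum with $S^2 \times S^1$ (the annulus case already produces $S^2 \times S^1$), while adding a handle ($g \mapsto g+1$) similarly contributes two such summands. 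This yields $\overline L \cong \#_{2g+k-1}(S^2 \times S^1) = \#_{2g+b}(S^2 \times S^1)$, with the convention $\#_0 = S^3$; equivalently one may invoke directly Raymond's classification of circle actions on closed orientable $3$-manifolds with these orbit invariants. Deleting the $n$ fixed points $\infty_i$ then gives $L \cong S^3 \setminus \{n \text{ points}\}$ when $g = b = 0$ and $L \cong \#_{2g+b}(S^2 \times S^1) \setminus \{n \text{ points}\}$ otherwise. I expect the main obstacle to be precisely this classification step: making the surgery bookkeeping rigorous (or quoting the $S^1$-action classification with the correct orbit data), together with the careful verifications that the $U(1)$-action extends over the cone points with them as fixed points and that the boundary arcs assemble into exactly one circle, so that $k = 1 + b$.
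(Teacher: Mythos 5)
Your proposal is correct, and its skeleton coincides with the paper's proof: cap off the $n$ cylindrical ends (the paper glues $3$-balls rather than cone points, which avoids any discussion of smoothing the cone vertex, though up to homeomorphism this is immaterial), extend the $U(1)$-action over the caps by the standard rotation of concentric spheres, observe that the $n$ fixed arcs of $L_{\text{fix}}$ together with the $n$ new fixed points close up into a \emph{single} fixed circle so that the compactified manifold has exactly $1+b$ fixed components, and finally delete the caps to recover $L$. The one genuine divergence is the classification step: the paper at this point simply invokes Raymond's theorem (Theorem \ref{Thm_Raymond}) on effective circle actions with fixed points and no nontrivial finite isotropy, whereas your primary route reconstructs $\overline L$ explicitly as the trivial circle bundle over the orbit surface with fibre-meridian solid tori glued along the $1+b$ boundary circles, and then classifies by induction via $0$-surgery on regular fibres. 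That route is viable and more self-contained: the fibre framing does agree with the disc framing of the meridian disc bounded by a regular fibre near a fixed circle, so each added boundary circle is indeed a connected sum with $S^2 \times S^1$, and your base case ($D^2$ giving the genus-one Heegaard splitting of $S^3$) is right. The place where the bookkeeping is genuinely nontrivial is the handle step $g \mapsto g+1$, which does not reduce quite as immediately to a single surgery picture; you correctly flag this, and your fallback of quoting the classification of $S^1$-actions with the given orbit data is exactly what the paper does, trading self-containedness for brevity and rigor by citation.
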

\begin{remark}\emph{ The Donaldson-Scaduto conjecture in \cite{donaldson2020associative} predicts the uniqueness of the special Lagrangian among all 3-dimensional special Lagrangian submanifolds satisfying given asymptotic conditions, not just special Lagrangian pairs of pants. Therefore, we expect that the second alternative for the topology is not realized by special Lagrangians with the prescribed cylindrical asymptote.}
\end{remark}

The proof relies on the following theorem of Raymond.

\begin{theorem}[{{\cite[Raymond, Theorem 1(i)-(ii)(a)]{raymond1968classification}}}]
\label{Thm_Raymond}
Every closed, orientable 3-manifold that admits an effective $U(1)$-action with fixed points and no non-trivial finite isotropy subgroups is homeomorphic to
\[S^3\#(S^2\times S^1)_1\#\dots \#(S^2\times S^1)_{2g+h-1},\]
where $g$ is the genus of the orbit space and $h$ is the number of connected components of the fixed point set.
\end{theorem}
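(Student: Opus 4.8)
The plan is to prove this classical result by the standard orbit-space analysis for circle actions on $3$-manifolds, specialized to the case at hand where the only orbit types are free orbits and fixed points. Write $M$ for the closed orientable $3$-manifold and let $M^{\ast} = M/U(1)$ be its orbit space. First I would run the slice theorem at each orbit. At a fixed point $p$ the isotropy representation on $T_pM \cong \mathbb{R}^3$ is odd-dimensional, hence splits as $\mathbb{R} \oplus \mathbb{C}$ with $U(1)$ acting on $\mathbb{C}$ with some weight $k$; the hypothesis of no non-trivial finite isotropy forces $k = \pm 1$, so every fixed point lies on a fixed circle admitting an invariant solid-torus neighborhood $D^2 \times S^1$ on which $U(1)$ rotates the $D^2$-factor. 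Together with the absence of exceptional orbits, this shows that $M^{\ast}$ is a compact surface whose boundary $\partial M^{\ast}$ is the image of the fixed-point set $F$, a disjoint union of $h$ circles, and whose interior parametrizes the free orbits. Since $M$ is orientable and $F \neq \emptyset$, the orbit space is orientable, placing us in the branch (i)--(ii)(a) of Raymond's classification; let $g$ be the genus of $M^{\ast}$.

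Next I would trivialize the bundle over the free part. Let $M_0 \subset M$ be the complement of open invariant tubular neighborhoods of the $h$ fixed circles; then $\pi : M_0 \to M^{\ast}_0$ is a principal $U(1)$-bundle over a compact surface $M^{\ast}_0$ of genus $g$ with $h$ boundary circles. Because $M^{\ast}_0$ has non-empty boundary it is homotopy equivalent to a wedge of $2g + h - 1$ circles, so $H^2(M^{\ast}_0;\mathbb{Z}) = 0$ and the bundle is trivial: $M_0 \cong M^{\ast}_0 \times S^1$. Reassembling, $M$ is recovered from $M^{\ast}_0 \times S^1$ by gluing in $h$ solid tori along the boundary tori $\partial M^{\ast}_0 \times S^1$. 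The crucial local computation is that the meridian of each gluing solid torus is precisely the orbit circle, which on $\partial M^{\ast}_0 \times S^1$ is the fibre factor $\{\mathrm{pt}\} \times S^1$; hence each filling is an \emph{untwisted} Dehn filling killing the $S^1$-fibre, with no extra framing data. This is where the hypothesis $h \geq 1$ matters: the presence of a fixed circle lets one normalize away any Euler-number invariant, so $M$ depends only on the pair $(g,h)$.

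The main step, and the main obstacle, is to identify this reassembled manifold as $\#_{2g+h-1}(S^2\times S^1)$. I would argue by induction on the complexity $(g,h)$ of the orbit space. The base cases are $(g,h) = (0,1)$, where $M^{\ast} = D^2$ and the fibre-filling of a single boundary torus recovers the Hopf-type action on $S^3$, and $(g,h) = (0,2)$, where $M^{\ast}$ is an annulus and the two fibre-fillings of $T^2 \times [0,1]$ produce $S^2 \times S^1$; a direct cut-and-paste verifies both. The inductive step realizes each elementary enlargement of the orbit space --- attaching a band that adds a boundary circle, or a band that raises the genus --- as an equivariant connected sum performed along a free orbit, which on total spaces forms a connected sum with the appropriate number of copies of $S^2 \times S^1$ so as to increase $2g + h - 1$ by the correct amount. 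Carrying out this bookkeeping carefully, tracking orientations and the fibre-filling framing through each modification, is the technical heart of the argument.

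As an independent confirmation of the last step, one can instead compute $\pi_1(M)$ by van Kampen from the decomposition $M = (M^{\ast}_0 \times S^1) \cup (\text{solid tori})$: the fillings kill the fibre class, leaving the free group of rank $2g + h - 1$, and a closed orientable $3$-manifold with free fundamental group of that rank is forced to be $\#_{2g+h-1}(S^2 \times S^1)$ by the prime decomposition theorem. I would present the constructive induction as the primary route, since it stays within the elementary equivariant techniques and avoids invoking the full prime decomposition machinery, and use the $\pi_1$ computation only as a consistency check on the final count.
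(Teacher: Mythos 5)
First, a point of comparison: the paper gives no proof of this statement at all --- it is quoted from Raymond with the citation \cite{raymond1968classification} standing in for the argument --- so your proposal can only be measured against the classical proof it is reconstructing. Most of your outline is the correct standard route: the slice-theorem analysis at fixed points (weight $\pm 1$ forced by the absence of finite isotropy, so the fixed set is a union of circles with rotational solid-torus neighborhoods), the orbit space as a compact surface with $h$ boundary circles, triviality of the principal $U(1)$-bundle over $M^{\ast}_0$ because $H^2(M^{\ast}_0;\mathbb{Z})=0$, and the recognition that $M$ is recovered from $M^{\ast}_0\times S^1$ by Dehn filling each boundary torus along the fibre slope, which is canonical and hence independent of the chosen trivialization. (One small assertion deserves a line of justification: that orientability of $M$ forces orientability of the orbit space. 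Over the free part the $S^1$-action trivializes the vertical tangent bundle, so $w_1(M_0)=\pi^{\ast}w_1(M^{\ast}_0)$, and $\pi^{\ast}$ is injective on first $\mathbb{Z}_2$-cohomology by the Gysin sequence; this is where the absence of $\mathbb{Z}_2$ isotropy, hence of reflections in slices, is used.)

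The genuine gap is in the step you yourself call the technical heart. The boundary-adding move is indeed an equivariant connected sum along free orbits with the standard rotation on $S^3$, but the genus-raising move is not: an equivariant connected sum along free orbits performs an \emph{interior} connected sum of orbit surfaces, so iterated sums of your two base pieces $(g,h)=(0,1)$ and $(0,2)$ only ever produce genus-zero orbit spaces, and the induction as described never reaches $g\geq 1$ (the genus move corresponds to a band joining two distinct boundary circles of the orbit surface, which no interior sum realizes). The repair is straightforward once you observe that after your trivialization step the remaining claim is purely topological. Let $N(g,h)$ denote $\Sigma_{g,h}\times S^1$ with all boundary tori filled along the fibre. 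Choose a non-separating properly embedded arc $\alpha\subset\Sigma_{g,h}$ with endpoints on the boundary; the vertical annulus $\alpha\times S^1$ caps off with two meridian disks of the fillings to an embedded non-separating $2$-sphere, and cutting along it and capping with balls yields $N(g',h')$ with $2g'+h'-1=2g+h-2$ while splitting off one $S^2\times S^1$ summand; induction then terminates at your correctly verified base case $N(0,1)\cong S^3$. Finally, your ``independent confirmation'' overstates what prime decomposition gives: from $\pi_1(M)\cong F_{2g+h-1}$, Kneser--Milnor alone does not force $M\cong \#_{2g+h-1}(S^2\times S^1)$; one must also exclude fake $3$-sphere summands (the Poincar\'e conjecture) and irreducible primes with non-trivial free fundamental group (sphere theorem plus asphericity). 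As a consistency check this is harmless, but it is not a self-contained alternative proof --- certainly not with the 1968 technology the cited theorem relies on.
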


\begin{proof}[Proof of Theorem \ref{Thm_PossibleTopology}]
{$L$ is $U(1)$-invariant and $L/U(1)$ is a surface with boundary homeomorphic to $L_{\text{fix}}$, as in Step 1 of Lemma \ref{Lemma_manifold with boundary}.}

To prove the homeomorphism type we compactify $L$ as follows. Let $\tilde{L}$ be the closed 3-manifold obtained by capping off the $n$ cylindrical ends with 3-balls. Note that at large distance, the $U(1)$-action is smoothly equivalent to the standard $U(1)$-action on the $S^2$-fibers at the ends of $L$, and it preserves these 2-sphere cross-sections. Specifically, the standard $U(1)$-action on $S^2 \subset \mathbb{R}^3$ as rotation around a fixed axis extends to the 3-ball by acting on each concentric $S^2$ centered at the origin via rotation around the same axis. This gives a smooth extension of the $U(1)$-action to $\tilde{L}$.

Since $L$ is a special Lagrangian, it is orientable, so $\tilde{L}$ is also orientable. {The $U(1)$-action on $\tilde L$ is obviously effective as it is on $L$, and the fixed point set  is non-empty. Since the $U(1)$-action on $X$ had only isotropy subgroups $U(1)$ and $\{1\}$, so is the action on $\tilde L$, namely there are no non-trivial finite isotropy subgroups. Thus by Theorem \ref{Thm_Raymond},
$$\tilde L\cong S^3\#(S^2\times S^1)_1\#\dots \#(S^2\times S^1)_{2g+h-1}.$$
where $g=\text{genus}( \tilde L/U(1))= \text{genus}(L/U(1))$, and $h$ is the number of connected components of the fixed point set  $\tilde L_{\text{fix}}$. 

In Step 2 of Lemma \ref{Lemma_manifold with boundary}, {we have seen that $L_{\text{fix}}$ has $n$ connected components that are homeomorphic to $\mathbb R$, extending from one end $L_i$ to the other end $L_{i+1}$}. Consequently, through the above capping off procedure, these combine to form one single connected component of  $\tilde L_{\text{fix}}$. Hence $h=1+b$, where $b$ is the number of circle components of $L_{\text{fix}}$. The claim follows by removing $n$ disjoint balls from $\tilde{L}$.}
\end{proof}

\printbibliography

\noindent
\author{Simons Center for Geometry and Physics, State University of New York, Stony Brook, NY 11794} \\ E-mail address: \href{ mailto:gbera@scgp.stonybrook.edu}{gbera@scgp.stonybrook.edu}

\vspace{10pt}

\noindent
\author{Department of Mathematics, Duke University, 120 Science Dr, Durham, NC 27708-0320} \\ E-mail address: \href{ mailto:saman.habibiesfahani@duke.edu}{saman.habibiesfahani@duke.edu}

\vspace{10pt}

\noindent
\author{Department of Pure Mathematics and Mathematical Statistics, Cambridge University, Wilberforce Road, Cambridge CB3 0WA, UK
} \\ E-mail address: \href{ mailto:yl454@cam.ac.uk}{yl454@cam.ac.uk}

\end{document}